\topskip \setlength{\parindent}{0pt} \setlength{\parskip}{5pt plus
\newtheorem{theorem}{Theorem}
\newtheorem{proposition}{Proposition}
\newtheorem{corollary}{Corollary}
\newtheorem{remark}{Remark}
\newtheorem{example}{Example}
\title{Whitney algebras and Grassmann's regressive products}
\author{Andrea Brini and Francesco Regonati \\ [18pt]
Dipartimento di Matematica \\ [13pt] ``Alma Mater Studiorum"
Universit\`{a} degli Studi di Bologna}
\begin{document}

\maketitle

\begin{abstract}
Geometric products on tensor powers $\Lambda(V)^{\otimes m}$ of an exterior algebra
and on Whitney algebras \cite{crasch}
provide a rigorous version of Grassmann's {\it regressive products} of 1844 \cite{gra1}.
We study geometric products and their relations with other classical operators on
exterior algebras, such as the Hodge $\ast-$operators and the {\it join} and {\it meet}
products in Cayley-Grassmann algebras \cite{BBR, Stew}.
We establish  encodings of tensor powers $\Lambda(V)^{\otimes m}$ and of Whitney algebras $W^m(M)$
in terms of letterplace algebras and of their geometric products in terms of
divided powers of polarization operators.
We use these encodings to provide simple proofs of the Crapo
and Schmitt exchange relations in Whitney algebras and of two typical classes of identities
in Cayley-Grassmann algebras .
\end{abstract}

\thanks{We thank Henry Crapo and William Schmitt for their advice, encouragement, and invaluable suggestions}

\tableofcontents

\section{Introduction}

In 1986, I. Stewart wrote:
{\it
``The late nineteenth century witnessed many attempts
to develop an algebra of $n-$dimensional space, by analogy with
the representation of the plane by complex numbers.
Prominent among them was Hermann Grassmann's ``Die Lineale Ausdehenungslehre"
(The Calculus of Extension), published in 1844.
Grassmann had the misfortune to write in a discursive, philosophical and obscure style
at the time when axiomatic presentation was becoming de riguer in the mathematical world"
}\cite{Stew}.

As definitively recognized in recent times \cite{Schu, Schu1, pet},
H. G. Grassmann was indeed the inventor of linear and multilinear algebra,
as well as of ``geometry" in {\it arbitrary} finite dimension (\cite{gra1}, 1844).

Grassmann's basic idea was to build up a formal system
that allows geometric entities to be manipulated in an intrinsic (invariant) way,
that is, by making no appeal to a reference system.
In Grassmann's vision, this kind of approach should put together
the synthetic and the analytic approaches to geometry in arbitrary finite dimension.
As Grassmann wrote in 1877:

\begin{quote}
{\it
``Extension theory forms the abstract basis of the theory of space (geometry), that is
it is the pure mathematical science, stripped of all geometric content,
whose special application is that theory.
\begin{footnote}
{
At the time of Grassmann, the terms ``space" and ``geometry"
were used to mean just dimensions one, two and three.
}
\end{footnote}
...
The purpose of this method of calculation in geometry is
to unify the synthetic and analytic methods, that is,
to transplant the advantages of each into the soil of the other, so that
any construction is accompanied by an elementary analytic operation and conversely.
(\cite{gra1}, p. 283 and 285)"
}
\end{quote}

Grassmann's elementary analytic operations were essentially of two kinds:
the {\it outer (progressive) product} (the one that is nowadays called the ``wedge product" in the exterior algebra) and
the {\it regressive product}.
It is worth  claiming that this second kind of product was not a {\it single} operation
but a {\it family} of (unary) operations.

Grassmann himself realized that his general approach
was not only obscure but also misleading for the Mathematicians of his time.
He published in 1862 a second and simplified version of his {\it opus magnum} of 1844.
In this version, he made the deliberate choice of restricting the definition of regressive product
to the special case of those he called {\it real} regressive products (see subsection 3.2, for details).
None the less, the impact of Grassmann's work upon
the mathematical community of the second half of the XIX century
remained almost irrelevant.

There were, to be sure, two bright exceptions:
W. K. Clifford and G. Peano, whose ideas laid the foundations of
the two main algebraic theories that, at present, fulfill
Grassmann's program in the more efficient way.

In his celebrated paper of 1878 \cite{Cliff},
W. K. Clifford introduced what are nowadays called {\it Clifford algebras}.
About one century later D. Hestenes and his school captured (see, e.g \cite{Hest, li})
the various geometric meanings of Clifford algebras in their full generality by introducing
the notion of {\it Geometric Clifford algebra};
these algebras are Clifford algebras in which a distinguished element is chosen, the {\it integral}.
For lack of space, here we don't speak about this important point of view.
We refer the reader to the recent book by H. Li \cite{li}.

In his book of 1888 \cite{Peano}, G. Peano made
another crucial step into an apparently different direction:
he realized that the 1862 regressive product can be defined (at least in dimension 3)
in a transparent and intrinsic way by fixing a {\it bracket}
- a non-degenerate, alternating multilinear  form -
on the ground vector space.
Peano's ideas were at the origin of the modern notion of Cayley-Grassmann algebra (CG-agebras, for short),
developed by G.-C. Rota and his school (see, e.g \cite{BBR}).
In the last decades,
a considerable amount of work has been done dealing with Cayley-Grassmann algebras
(see, e.g. \cite {li}, Chapters 2, 3 and the Bibliography).
These algebras nowadays have important applications both in Mathematics and in Computer Science
(Invariant Theory, Geometric theorem proving  and Computer vision, to name but a few).

In this paper, we provide
a comparative discussion of three classes of algebras
that sprang out from the ideas of Grassmann and Peano, and
some of their main features and typical applications.
These algebras are
the Cayley-Grassmann algebra of a Peano space, endowed with the {\it join} and {\it meet} products;
the tensor powers $\Lambda(V)^{\otimes m}$ of the exterior algebra of a vector space $V,$
endowed with {\it geometric products}, that provide a formalism closer to Grassmann's original approach;
the Whitney algebras $W^m(M)$ of a matroid $M,$ endowed with {\it geometric products}, at first motivated
by the study of representability of matroids.
The study of  tensor powers of exterior algebras and of Whitney algebras   endowed
with geometric products is a recent subject
(\cite{Mou, Moust, crasch, crapo, berget}).

The first main concern of the present paper is the discussion of {\it geometric products} in
the tensor powers $\Lambda(V)^{\otimes m}$ of the exterior algebra of a finite dimensional vector space $V.$
Geometric products provide a rigorous version of Grassmann's regressive products,
both in the {\it real} and in the {\it formal} cases (\cite{gra1}, $\S125$).
It is worth noticing that Grassmann failed in finding a geometric meaning of formal geometric products
\begin{footnote}
{
In 1877, Grassmann wrote:
``In the 1862 Ausdehnungslehre the concept of the formal regressive product is abandoned as sterile,
thus simplifying the whole subject."
\cite{gra1}, p.200
}
\end{footnote};
in Section 5, we exhibit such a geometric meaning,
by showing that geometric products lead to a concise treatment of (generalized) Hodge $\ast-$operators.

The second main concern of our paper is to establish the encoding
of the algebras $\Lambda(V)^{\otimes m}$ and $W^m(M)$ and of their geometric products by means of
quotients of skew-symmetric {\it letterplace algebras} $Skew[\emph{L}|\underline{m}]$ \cite{pri, berget} and
{\it place polarization operators} (Section 7).
Skew-symmetric letterplace algebras are {\it algebras with Straightening law}, and
the action of place polarization operators implements a Lie action of the
{\it general linear Lie algebra} ${\bf gl}(m, \mathbb{Z})$ (Section 6).
This encoding (foreshadowed by G.-C. Rota) allows the treatment of identities to be simplified.

On the one hand, we have that
all the identities among linear combinations of iterated  ``geometric products"
in the algebra $\Lambda(V)^{\otimes m}$ are consequences
of the {\it  Superalgebraic Straightening law} of Grosshans, Rota and Stein \cite{GRS}.
This fact may be regarded as an analog of the Second Fundamental Theorem of Invariant Theory.
The application to Whitney algebras is even more relevant.
The  Straightening law provides a system of $\mathbb{Z}-$linear generators of the ideals that define
the algebras $W^m(M)$ (Subsection 7.3).
Thus, Whitney algebras are still algebras with Straightening law, and we have an algorithm for
the solution of the {\it word problem}
(for example, see the proof of the exchange relations of Crapo and Schmitt, Section 8).
The iterative application of the  Straightening laws gives
rise to an explosive increment of computational complexity; this
is the reason why a systematic investigation of special identities
is called for.

On the other hand, identities that hold for
geometric products may also be derived from identities that hold
in the enveloping algebra ${\bf U}({\bf gl}(m, \mathbb{Z})).$
Thus, the basic ideas borrowed from the method {\it Capelli virtual variables}
(see, e.g. \cite{abl}, \cite{brini}, \cite{bt}, \cite{reg}) can be applied to manipulations of
geometric products (see, e.g. Sections 9 and 10).

\section{The algebras: generalities}

\begin{enumerate}
\item $\mathbf{CG-algebras}$. These algebras are the classical
exterior algebras endowed with the {\it join} product (the
traditional wedge product) and the {\it meet} product. As the join
product of two extensors that represent subspaces $U$ and $V$ in
general position yields an extensor that represents the space
direct sum of $U$ and $V,$ the meet product represents the
intersection space of $U$ and $V.$

In the language of CG-algebras, projective geometry statements and
constructions can be expressed as (invariant) identities and
equations.

\begin{example} (The Desargues Theorem)

In the projective plane ${\mathbf P [{\mathbf K^3}]}$ , consider two triangles $123$ and $1'2'3'$.

Consider the three lines $11'$, $22'$ and $33'$. In the language
of CG-algebras, we have: the  lines     $11'$,  $22'$ and  $33'$
are {\it  concurrent}    if and  only  if
$$
(1 \vee 1')  \wedge (2 \vee 2')    \wedge  ( 3 \vee 3')
$$
equals  ZERO.

Consider the three points $12 \cap 1'2'$, $13  \cap 1'3'$, $23
\cap 2'3'$. These points are {\it collinear} if and only if the
bracket
$$
[(1 \vee 2) \wedge (1' \vee 2'), (1 \vee 3) \wedge (1' \vee 3'),
(2 \vee 3) \wedge (2' \vee 3')]
$$
equals  ZERO.

In the CG-algebra, we have the identity (see, subsection... below)
$$
[(1 \vee 2) \wedge (1' \vee 2'), (1 \vee 3) \wedge (1' \vee 3'),
(2 \vee 3) \wedge (2' \vee 3')] =
$$
$$
- [1, 2, 3][1', 2', 3']  (1 \vee 1')  \wedge (2 \vee 2') \wedge
( 3 \wedge 3').
$$
Thus, the preceding identity implies the following geometric
statement:

The three points
$$
12 \cap 1'2', \ 13  \cap 1'3', \  23 \cap 2'3'
$$
are collinear if and only if the three lines
$$
11',  22',  33'
$$
are concurrent.
\end{example}

It turns out that there is even a third class of  unary
operations, the {\it Hodge star operators}, which is the vector
space analog of complementation in Boolean algebra. The Hodge
$*-$operators are invariantly defined with respect to the {\it
orthogonal group}, and formalize Grassmann's ``Erganzung"; these
operators implement the duality between meet and join.

The notion of CG-algebra endowed with a Hodge $*-$operator is {\it
equivalent} to  that of Geometric algebra in the sense of Hestenes
\cite{Bravi}.

\item
$\mathbf{Tensor\ powers\ of\ the\ exterior\ algebra\ of\ a\ vector\ space }.$
(see \cite{Mou, Moust, crasch, Fau, crapo})
These algebras are endowed with
a product induced by the wedge product, as well as with
a family of linear operators, called {\it geometric products},
that yield a rigorous formulation of Grassmann's 1844 regressive products.
In particular, given two extensors that represent subspaces $U$ and $V,$
{\it not necessarily in general position}, there is a special
geometric product that yields the tensor product of two extensors
that represent the space sum of $U$ and $V$ and the intersection
space of $U$ and $V,$ respectively.

\item
$\mathbf{Whitney\ algebras\ of\ matroids}$
(see \cite{crasch, crapo, berget}).
This class of algebras provides a generalization of
the algebras mentioned in the preceding point, as well as of
the {\it White bracket ring} \cite{whi1}.
Roughly speaking,
Whitney algebras can be regarded as the
generalization of the tensor powers of exterior algebras towards
algebras associated to  systems of points subject to relations of
abstract dependence, in the sense of H.Whitney \cite{whitney}.
Again, in Whitney algebras
the join product and the geometric products are defined.

\end{enumerate}

\section{The algebras: basic constructions}

We provide explicit constructions  of the algebras mentioned
above, and a description of their main features.
First of all, we fix some terminology and notation.

\noindent
Let $V$ be an  $n-$dimensional vector space over a field $\mathbb{K},$
and let $\Lambda(V)$ be its exterior algebra.
An element $A$ of $\Lambda(V)$ that can be written as a product vectors is called an {\it extensor}.
To each representation of a given extensor $A$ as a product of vectors
there corresponds a basis of one and the same dimensional subspace $\overline{A}$ of $V;$
the number of vectors in a representation of $A$ is the {\it step} of $A,$
and equals the dimension of $\overline{A}.$
An extensors $A$ divides an extensor $B,$ in the usual sense,
if and only if the subspace $\overline{A}$ is contained in the subspace $\overline{B}.$
The subspaces of $V$ ordered by set inclusion form a lattice in which, for any two subspaces $U_1, U_2,$
the greatest lower bound $U_1 \frown U_2$ is the subspace set-intersection of $U_1$ and $U_2,$ and
the least upper bound $U_1 \smile U_2$ is the subspace sum of $U_1$ and $U_2.$
This lattice is modular, thus
for any two subspaces $U_1, U_2$ there is a canonical isomorphism between
the intervals $[U_1 \frown U_2, U_1]$ and $[U_2, U_1 \smile U_2].$

\subsection{Cayley-Grassmann algebras}

Let $\mathbb{K}$ be a field. Let $V$ be a vector space over
$\mathbb{K},$ $dim(V) = n,$ endowed with a bracket $[  \ ]$
(a non-degenerate alternating $n-$multilinear form).
The pair $(V, [\ ])$ is called a {\it Peano space}.
The {\it CG-algebra} of $(V, [\ ])$
is the exterior algebra $\Lambda(V)$ endowed with two associative
products:
\begin{enumerate}
\item the usual wedge product, called the {\it join} and denoted
by the symbol $\vee;$ \item the {\it meet}, denoted by the symbol
$\wedge.$ This product is defined in the following way. Let $A$
and $B$ be extensors of steps $a$ and $b,$ respectively, with $a +
b \geq n,$ then:
$$
A \wedge B
=
\sum_{(A)_{(n-b,a+b-n)}} [A_{(1)} B] A_{(2)}
=
\sum_{(B)_{(a+b-n,n-a)}} [A B_{(2)}] B_{(1)}.
$$
If $a + b < n,$ then $A \wedge B = 0$ by definition.

\noindent
(Here we use the Sweedler notation for {\it coproduct slices} in
bialgebras \cite{Swe, Abe}: for an extensor $C$ of step $c,$
$$
\Delta(C)_{(c-h,h)}
=
\sum_{(C)_{(c-h, h)}} C_{(1)}\otimes C_{(2)},
$$
where $step(C_{(1)})= c-h$ and $step(C_{(2)})= h.$)

\noindent
The fact that there are two completely different ways of computing
the meet of two extensors gives a great suppleness and power to
the CG-algebra of a Peano space.
\end{enumerate}

\noindent
The geometric meaning of the join product $\vee$ (the wedge in the
standard language for exterior algebras) is well-known. Let $A$
and $B$ be extensors that represent subspaces $\overline{A}$ and
$\overline{B}$, respectively. If $\overline{A} \frown \overline{B}
= (0)$, then $\overline{A \vee B} = \overline{A} \smile \overline{B};$
otherwise, $A \vee B$ is zero.
Not unexpectedly, the geometric meaning of the meet product
$\wedge$ is the ``dual" of the geometric meaning of the join product.
If $\overline{A} \smile \overline{B} = V$, then $\overline{A \wedge B} = \overline{A}
\frown \overline{B};$ otherwise, $A \wedge B$ is zero.

\begin{example} (The intersection of two lines in $\mathbb{P}^2(\mathbb{K})$)
Let $(V, [\ ])$ be a Peano space over the field $\mathbb{K}$, $dim(V) = 3.$
Let $p_1, p_2, q_1, q_2$ be four mutually non-proportional
vectors in $V$ that span $V.$
The $2-$extensors $p_1 \vee p_2,$ $q_1 \vee q_2$ represent two different lines
in $\mathbb{P}^2(\mathbb{K}).$
The vector
$$
(p_1 \vee p_2) \wedge (q_1 \vee q_2) = [p_1, q_1, q_2]p_2 - [p_2, q_1, q_2]p_1
\\
= - [p_1, p_2, q_1]q_2 + [p_1, p_2, q_2]q_1
$$
represents the intersection point of the two lines.
\end{example}

\subsection{Tensor powers of exterior algebras}

Let $\mathbb{K}$ be a field.
Let $V$ be a vector space over $\mathbb{K},$ $dim(V) = n.$
Let $\Lambda(V) \otimes \Lambda(V)$ be the  tensor square of the exterior algebra $\Lambda(V),$
in the category of $\mathbb{Z}_2-$graded $\mathbb{K}-$algebras.
The product in this algebra is given by
$$
(A_1 \otimes A_2)
(B_1 \otimes B_2)
=
(-1)^{a_2b_1}
A_1B_1 \otimes A_2B_2
$$
for every extensors $A_i, B_j$ of steps $a_i, b_i$ in $\Lambda(V).$
This algebra is endowed with two families of linear operators,
called {\it geometric products}:
\begin{itemize}
\item 'rising' geometric products:
$$
\diamond_{21}^{(h)}:
\Lambda(V) \otimes \Lambda(V) \rightarrow \Lambda(V) \otimes \Lambda(V),
\quad h \in \mathbb{Z}^+
$$
defined by setting, for $A$ and $B$ extensors of steps $a$ and
$b,$ respectively,
$$
\diamond_{21}^{(h)}(A \otimes B)
=
\sum_{(A)_(a-h, h)} A_{(1)}\otimes A_{(2)}B.
$$
\item 'lowering' geometric products:
$$
\diamond_{12}^{(h)}:
\Lambda(V) \otimes \Lambda(V) \rightarrow \Lambda(V) \otimes \Lambda(V),
\quad h \in \mathbb{Z}^+
$$
defined by setting, for $A$ and $B$ extensors of steps $a$ and
$b,$ respectively,
$$
\diamond_{12}^{(h)}(A \otimes B)
=
\sum_{(B)_{(h, b-h)}} AB_{(1)}\otimes B_{(2)}.
$$
\end{itemize}

The Propositions below exploit the geometric meaning of the
geometric products $\diamond_{12}^{(h)},$ $h$ any positive
integer.
In the language of Grassmann, Proposition 1 deals with {\it real
regressive} products (\cite{gra1}, \S 125). The proof of
Proposition 1 is nowadays a simple exercise of multilinear
algebra; nonetheless, it is worth noticing that this Proposition
and its proof are quite close to Grassmann's way of manipulating
extensors (\cite{gra1}, \S 126, \S\S 130-132).
\begin{footnote}
{
For the convenience of the reader, we recall that the terms
{\it magnitude}, {\it system}, {\it nearest covering system} and
{\it common system} of the Ausdehnungslehre corresponds to {\it
extensor}, {\it subspace}, {\it sum} and {\it intersection} of
subspaces.
}
\end{footnote}

\begin{proposition}
Let $A$ and $B$ be extensors, and denote by
$\overline A, \overline B $ the corresponding subspaces of $V.$
In the modular lattice of subspaces of $V,$ consider the isomorphic intervals
$
[\overline{A} \frown \overline{B}, \overline{A}]
\cong
[\overline{B}, \overline{A} \smile \overline{B}],
$
and denote by $p$ their common dimension.

\begin{itemize}
\item For $h = p,$ we have
$$
\diamond^{(h)}_{21} (A \otimes B) = C \otimes D,
$$
where $\overline{C} = \overline{A} \frown \overline{B}$ and
$\overline{D} = \overline{A} \smile \overline{B};$
\item For $h > p,$ we have
$$
\diamond^{(h)}_{21} (A \otimes B) = 0.
$$
\end{itemize}
The operators $\diamond^{(h)}_{12}$ have analogous geometric
meanings.
\end{proposition}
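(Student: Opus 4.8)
The plan is to reduce everything to a single well-chosen basis and then read off the answer from the combinatorics of the coproduct. First I would record the dimensions: set $q = \dim(\overline{A} \frown \overline{B})$, so that $p = a - q = \dim(\overline{A} \smile \overline{B}) - b$. Then choose a basis $c_1, \dots, c_q$ of the intersection $\overline{A} \frown \overline{B}$, extend it to a basis $c_1, \dots, c_q, a_1, \dots, a_p$ of $\overline{A}$, and independently extend it to a basis $c_1, \dots, c_q, b_1, \dots, b_{b-q}$ of $\overline{B}$. A dimension count shows the combined family $c_1, \dots, c_q, a_1, \dots, a_p, b_1, \dots, b_{b-q}$ is a basis of $\overline{A} \smile \overline{B}$: its cardinality $q + p + (b-q) = a + b - q$ equals $\dim(\overline{A} \smile \overline{B})$, so the vectors, spanning the sum, must be independent. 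Writing $C = c_1 \wedge \cdots \wedge c_q$, up to nonzero scalars (which I absorb into the extensors) I then have $A = C \wedge a_1 \wedge \cdots \wedge a_p$ and $B = C \wedge b_1 \wedge \cdots \wedge b_{b-q}$.

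Next I would expand the coproduct slice. With $A = x_1 \wedge \cdots \wedge x_a$ (the $x_i$ being the adapted basis vectors above), the slice $\Delta(A)_{(a-h,h)}$ is the signed sum over $h$-element index subsets $S$ of the terms $x_{S^c} \otimes x_S$, where $x_{S^c}$ has step $a-h$ and $x_S$ has step $h$, so that
$$
\diamond_{21}^{(h)}(A \otimes B) = \sum_{|S| = h} \pm\, x_{S^c} \otimes (x_S \wedge B).
$$
The crucial observation is a vanishing criterion coming from nilpotency in $\Lambda(V)$: each $c_i$ already occurs as a factor of $B$, so $x_S \wedge B = 0$ whenever $S$ involves any of the intersection indices; a term survives if and only if $S$ is drawn entirely from the $p$ ``extra'' vectors $a_1, \dots, a_p$.

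From this the two cases are immediate. If $h > p$ there is no $h$-subset of a $p$-element set, every term vanishes, and $\diamond_{21}^{(h)}(A \otimes B) = 0$. If $h = p$ the unique surviving subset is $S = \{a_1, \dots, a_p\}$, whence $x_{S^c} = C$ and
$$
\diamond_{21}^{(p)}(A \otimes B) = \pm\, C \otimes (a_1 \wedge \cdots \wedge a_p \wedge B).
$$
Here $\overline{C} = \overline{A} \frown \overline{B}$ by construction, while $a_1 \wedge \cdots \wedge a_p \wedge B$ is the wedge of the full combined basis, hence a nonzero extensor whose span $\overline{D}$ is exactly $\overline{A} \smile \overline{B}$. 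Setting $D = a_1 \wedge \cdots \wedge a_p \wedge B$ (absorbing the sign) gives the claimed identity; note that since only one term survives there is no possibility of cancellation.

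The main obstacle is conceptual rather than computational: one must justify the simultaneous factorization of $A$ and $B$ through a common extensor $C$ representing $\overline{A} \frown \overline{B}$, which is precisely where the modular structure of the subspace lattice is used; once the adapted basis is in place, the sign bookkeeping is routine and the ``only the top slice survives'' principle does all the work. Finally, the analogous statement for the lowering products $\diamond_{12}^{(h)}$ follows by the mirror-image argument: one expands $\Delta(B)_{(h,b-h)}$ and wedges its step-$h$ part into $A$ on the first factor, and the same vanishing criterion now discards every term whose index set meets the intersection vectors $c_i \in \overline{A}$. The critical value is thus $b-q$, the common dimension of the companion intervals $[\overline{A} \frown \overline{B}, \overline{B}] \cong [\overline{A}, \overline{A} \smile \overline{B}]$, and at that value one obtains $D \otimes C$ with $\overline{D} = \overline{A} \smile \overline{B}$ and $\overline{C} = \overline{A} \frown \overline{B}$.
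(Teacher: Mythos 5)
Your proof is correct and takes essentially the same route as the paper's: the paper factors $A = CA'$ with $\overline{C} = \overline{A} \frown \overline{B}$ and $step(A') = p$, expands $\Delta(CA') = \Delta(C)\Delta(A')$ by the bialgebra property, and discards every slice in which a nontrivial piece of $C$ lands on the $B$-side --- which is exactly your vanishing criterion, your adapted basis being a concrete realization of that factorization. The only cosmetic difference is that you work with an explicit basis and subset combinatorics where the paper argues abstractly with the coproducts of the factors $C$ and $A'$.
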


\begin{proof}
Let $C$ and $A'$ be extensors such that $\overline{C} =
\overline{A} \frown \overline{B}$ and $CA' = A,$ $step(A') = p$
(here we use juxtaposition to mean the exterior product of
extensors). Let $a, c$ denote the steps of $A$ and $C,$
respectively. By definition, and since the exterior algebra is a
bialgebra, we have
\begin{multline*}
\diamond^{(h)}_{21} (A \otimes B)
=
\sum_{(a-h,h)} (CA')_{(1)} \otimes (CA')_{(2)}B =
\\
\sum_{h_1+h_2=h}
\left(
\sum_{(C)_{(c-h_1,h_1)}} C_{(1)}\otimes C_{(2)}
\sum_{(A')_{(p-h_2,h_2)}} (A')_{(1)} \otimes (A')_{(2)}
\right)
\left( 1 \otimes B \right)
=
\\
\sum_{(A')_{(p-h,h_)}} C(A')_{(1)} \otimes (A')_{(2)}B.
\qquad (\dag)
\end{multline*}

\begin{itemize}
\item if $h > p,$ the sum $(\dag)$ has no summand, hence reduces
to zero; \item if $h = p,$ formula $(\dag)$ simplifies to $C
\otimes A'B,$ and it turns out that $\overline{A'B} = \overline{A}
\smile \overline{B}.$
\end{itemize}
\end{proof}

\begin{example} (The intersection of two coplanar lines in $\mathbb{P}^n(\mathbb{K})$)
\noindent
Let $V$ be a vector space over the field $\mathbb{K},$ $dim(V) = n + 1\geq 4.$
Let $p_1, p_2, q_1, q_2$ be four mutually non-proportional
vectors in $V$ that span a 3-dimensional subspace of $V.$
The $2-$extensors $p_1p_2,$ $q_1q_2$ represent two different
coplanar lines in $\mathbb{P}^n(\mathbb{K}).$
The problem of finding their intersection point cannot be treated by
the meet in the CG-algebra of a Peano space $(V, [\ ]),$ since
$
(p_1p_2) \wedge (q_1q_2) = 0.
$
However, the problem can be solved in the context of the algebra $\Lambda(V) \otimes \Lambda(V)$
endowed with geometric products.
There are two extensors $u$ and $v$ such that
$$
\diamond^{(1)}_{21} (p_1p_2 \otimes q_1q_2) = p_1 \otimes p_2q_1q_2 - p_2 \otimes p_1q_1q_2 = u \otimes v
$$
where
$$
\overline{u} = \overline{p_1p_2} \frown \overline{q_1q_2} \quad and \quad
\overline{v} = \overline{p_1p_2} \smile \overline{q_1q_2}.
$$
These extensors can be described in function of the vectors $p$'s and $q$'s as follows.
Without loss of generality, we can assume that the extensor
$
p_2q_1q_2
$
represents the plane
$
\overline{p_1p_2} \smile \overline{q_1q_2}.
$
Notice that there exists a scalar $\lambda \in \mathbb{K}$ such that
$
p_1q_1q_2 = \lambda p_2q_1q_2.
$
Now, we have
\begin{multline*}
\diamond^{(1)}_{21} (p_1p_2 \otimes q_1q_2)
=
p_1 \otimes p_2q_1q_2 - p_2 \otimes p_1q_1q_2
=
\\
p_1 \otimes p_2q_1q_2 - \lambda p_2 \otimes p_2q_1q_2
=
(p_1 - \lambda p_2) \otimes p_2q_1q_2.
\end{multline*}
Thus, we can take
$
u = p_1 - \lambda p_2
$
and
$
v = p_2q_1q_2.
$
The vector $p_1 - \lambda p_2$ represents the intersection point of the coplanar lines
$\overline{p_1p_2},$ $\overline{q_1q_2}$ in $\mathbb{P}^n(\mathbb{K}).$
\end{example}

In the language of Grassmann, Proposition 2 will deal with {\it formal
regressive} products (\cite{gra1}, \S 125). Grassmann used the
term {\it formal} since he didn't found any geometric meaning for
them. As a matter of fact, Proposition 2 exhibits such a geometric
meaning, that will turn out to be closely related to the notion of
{\it Hodge $\ast-$operator}.

\begin{proposition}
Let $A$ and $B$ be extensors, and denote by $ \overline A,
\overline B $ the corresponding subspaces of $V.$ In the modular
lattice of subspaces of $V,$ consider the isomorphic intervals
$[\overline{A} \frown \overline{B}, \overline{A}] \cong
[\overline{B}, \overline{A} \smile \overline{B}],$ and denote by $p$
their common dimension.
For $0 \leq h \leq p,$ we have:
\begin{itemize}
\item
the left span of
$
\diamond^{(h)}_{21} (A \otimes B)
$
equals the linear span of all the extensors $H$ that
represent subspaces $\overline{H}$ of codimension $h$ in the
interval $[\overline{A} \frown \overline{B}, \overline{A}];$
\item
the right span of
$
\diamond^{(h)}_{21} (A \otimes B)
$
equals the linear span of all the extensors $K$
that represent subspaces $\overline{K}$ of dimension $h$ in the
interval $[\overline{B}, \overline{A} \smile \overline{B}].$
\end{itemize}

\noindent
The operators $\diamond^{(h)}_{12}$ have analogous geometric
meanings.
\end{proposition}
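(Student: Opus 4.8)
The plan is to build directly on the computation already carried out in the proof of Proposition~1. Keeping that notation, fix an extensor $C$ with $\overline{C} = \overline{A} \frown \overline{B}$ and a factorization $A = CA'$ with $\dim\overline{A'} = p$, so that
$$
\diamond^{(h)}_{21}(A \otimes B) = \sum_{(A')_{(p-h,h)}} C(A')_{(1)} \otimes (A')_{(2)}B. \qquad (\dagger)
$$
To read off the two spans I would choose a basis adapted to the modular picture: a basis $c_1,\dots,c_c$ of $\overline{C}$, vectors $e_1,\dots,e_p$ completing it to a basis of $\overline{A}$ (so that $A'=e_1\cdots e_p$ up to a scalar), and vectors $f_1,\dots,f_{b-c}$ completing $c_1,\dots,c_c$ to a basis of $\overline{B}$. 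Writing $e_S=\prod_{i\in S}e_i$ and $S^{c}=\{1,\dots,p\}\setminus S$, expanding the coproduct slice of $A'$ turns $(\dagger)$ into the explicit sum
$$
\diamond^{(h)}_{21}(A \otimes B) = \sum_{|S|=h} \pm\, C\,e_{S^{c}} \otimes e_S B,
$$
the signs being irrelevant for what follows.

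The next step is the reduction of the two spans to spans of the factors. Because $c_1,\dots,c_c,e_1,\dots,e_p,f_1,\dots,f_{b-c}$ is a basis of $\overline{A}\smile\overline{B}$, the left factors $\{C e_{S^{c}}\colon |S^{c}|=p-h\}$ are linearly independent, and so are the right factors $\{e_S B\colon |S|=h\}$. Hence the left span of $\diamond^{(h)}_{21}(A\otimes B)$ is exactly $\mathrm{span}\{C e_{S^{c}}\}$ and its right span is exactly $\mathrm{span}\{e_S B\}$. Note that each $Ce_{S^{c}}$ is an extensor of step $a-h$ representing a subspace sitting between $\overline{A}\frown\overline{B}$ and $\overline{A}$, i.e.\ of codimension $h$ in that interval, and each $e_SB$ is an extensor of step $b+h$ representing a subspace between $\overline{B}$ and $\overline{A}\smile\overline{B}$ of dimension $h$ in that interval; so the inclusion $\subseteq$ of both claimed identities is immediate.

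For the reverse inclusion, which is the real content, I would pass to a quotient. Multiplication by $C$ induces an isomorphism of graded spaces $C\wedge\Lambda(\overline{A}) \cong \Lambda(\overline{A}/\overline{C})$ sending $C\wedge X$ to the class of $X$; under it the family $\{Ce_{S^{c}}\}$ corresponds to the standard monomial basis $\{\bar e_{S^{c}}\}$ of $\Lambda^{p-h}(\overline{A}/\overline{C})$, so $\mathrm{span}\{Ce_{S^{c}}\}=C\wedge\Lambda^{p-h}(\overline{A})$. On the other hand every extensor $H$ with $\overline{A}\frown\overline{B}\subseteq\overline{H}\subseteq\overline{A}$ and $\dim\overline{H}=a-h$ factors as $H=CH'$, hence corresponds to the decomposable class $\bar H'\in\Lambda^{p-h}(\overline{A}/\overline{C})$; since decomposables span every exterior power, the span of all such $H$ is again $C\wedge\Lambda^{p-h}(\overline{A})$. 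This gives the left span statement. The right span statement follows by the symmetric argument, using the splitting $\overline{A}\smile\overline{B}=E\oplus\overline{B}$ with $E=\mathrm{span}(e_1,\dots,e_p)$ and the isomorphism $\Lambda(E)\wedge B\cong\Lambda\big((\overline{A}\smile\overline{B})/\overline{B}\big)$ furnished by multiplication by $B$.

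I expect the only delicate point to be this reverse inclusion, and precisely the observation that although the extensors $Ce_{S^{c}}$ represent only the coordinate subspaces spanned by $\overline{C}$ together with subsets of the $e_i$, their linear span already captures all extensors in the interval. This is exactly the statement that the monomial basis of $\Lambda^{p-h}(\overline{A}/\overline{C})$ and the set of all decomposable elements there have the same linear span, and the reduction via multiplication by $C$ (resp.\ by $B$) is what converts it into a transparent fact about exterior powers.
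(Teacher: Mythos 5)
Your proof is correct and follows essentially the same route as the paper's: factor $A = CA'$, expand $\diamond^{(h)}_{21}(A\otimes B)$ into a minimal (independent) representation coming from a basis adapted to $\overline{C}\subseteq\overline{A}$, and read off the two spans from the linear independence of the left and right factors. The only difference is one of detail: where the paper simply asserts that the span of these factors ``turns out to be'' the span of all extensors in the relevant interval, you justify this reverse inclusion via the isomorphisms $C\wedge\Lambda(\overline{A})\cong\Lambda\bigl(\overline{A}/\overline{C}\bigr)$ and $\Lambda(E)\wedge B\cong\Lambda\bigl((\overline{A}\smile\overline{B})/\overline{B}\bigr)$ together with the fact that decomposable elements span each exterior power, thereby filling in a step the paper leaves implicit.
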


\begin{proof}
Let $C$ and $A'$ be extensors such that $\overline{C} =
\overline{A} \frown \overline{B}$ and $CA' = A,$ $step(A') = p.$
We have
$$
\diamond^{(h)}_{21} (A \otimes B)
=
\sum_{(A')_{(p-h,h_)}} C(A')_{(1)} \otimes (A')_{(2)}B.
\qquad (\dag)
$$
Assume that
$$
\Delta(A')
=
\sum_{(A')_{(p-h,h)}} (A')_{(1)} \otimes (A')_{(2)}
$$
is a minimal representation of the tensor $\Delta(A');$ thus it
has exactly ${ p \choose h }$ terms, the set of the extensors
$(A')_{(1)}$ is linearly independent, the set of the extensors
$(A')_{(2)}$ is linearly independent.

Then the sum $(\dag)$ has ${ p \choose h }$ terms; each of the
extensors $C(A')_{(1)}$ represents a subspace of codimension $h$
in the interval $[\overline{A} \frown \overline{B},
\overline{A}],$ and the set of these extensors is linearly
independent; each of the extensors $(A')_{(2)}B$ represents a
subspace of dimension $h$ in the interval $[\overline{B},
\overline{A} \smile \overline{B}],$ and the set of these extensors is
linearly independent.

The left span of $\diamond^{(h)}_{21} (A \otimes B)$ is then the
space spanned by the extensors $C(A')_{(1)},$ which turns out to
be the space spanned by all the extensors $H$ that represent
subspaces $\overline{H}$ of codimension $h$ in the interval
$[\overline{A} \frown \overline{B}, \overline{A}].$

The right span of $\diamond^{(h)}_{21} (A \otimes B)$ is then the
space spanned by the extensors $(A')_{(2)}B,$ which turns out to
be the space spanned by all the extensors $K$ that represent
subspaces $\overline{K}$ of dimension $h$ in the interval
$[\overline{B}, \overline{A} \smile \overline{B}].$
\end{proof}

\begin{remark}
We mention that geometric products provide a simple
characterization of the inclusion relation between subspaces.
Given two extensors $A$ and $B,$
$$
\overline{A} \subseteq \overline{B} \quad if \ and \ only \ if
\quad \diamond_{21}^{(1)} (A \otimes B) = 0.
$$
\end{remark}

\begin{example} (Two non-coplanar lines in $\mathbb{P}^n(\mathbb{K}),$ $n \geq 3$)
Let $V$ be a vector space over the field $\mathbb{K},$ $dim(V) n + 1 \geq 4.$
Let $p_1, p_2, q_1, q_2$ be four vectors in $V$
that span a 4-dimensional subspace of $V.$
The $2-$extensors $p_1p_2,$ $q_1q_2$ represent two
non-coplanar lines  in $\mathbb{P}^n(\mathbb{K}).$
The tensor
$$
\diamond^{(1)}_{21}  (p_1p_2 \otimes q_1q_2) = p_1 \otimes p_2q_1q_2 - p_2 \otimes p_1q_1q_2
$$
is not a decomposable tensor.
This representation is a minimal one.

\noindent
The left span of $\diamond^{(1)}_{21} (p_1p_2 \otimes q_1q_2)$ is
the vector space spanned by the set of vectors $\{p_1, p_2 \},$ that equals
the vector space spanned by the vectors that represent
the points in the projective line $\overline{p_1p_2}.$

\noindent
The right span of $\diamond^{(1)}_{21} (p_1p_2 \otimes q_1q_2)$ is
the vector space spanned by the set of $3-$extensors $\{p_2q_1q_2, -p_1q_1q_2 \},$ that equals
the vector space spanned by the $3-$extensors that represent
the planes that contain the projective line $\overline{q_1q_2},$
and are contained in the projective subspace $\overline{p_1p_2q_1q_2 }$.
\end{example}

More generally, let $\Lambda(V)^{\otimes m}$ be the  $m-$th tensor power of the exterior algebra $\Lambda(V),$
in the category of $\mathbb{Z}_2-$graded $\mathbb{K}-$algebras.
On this algebra we have a family of geometric products
$$
\diamond_{ij}^{(h)}:
\Lambda(V)^{\otimes m} \rightarrow \Lambda(V)^{\otimes m},
$$
for every $m = 2, 3, \ldots$ and $1\leq i, j \leq m,$ $i \neq j,$
defined as follows.

\noindent
Let $i <j;$ for any $A_1, \ldots, A_m$ extensors of steps $a_1,
\ldots, a_m.$ The rising geometric products are given by
\begin{multline*}
\diamond^{(h)}_{ji}
\left(
A_1 \otimes \cdots \otimes A_i \otimes \cdots \otimes A_j \otimes \cdots \otimes A_m
\right)
=
\\
=
(-1)^{h ( a_{i+1} + \cdots + a_{j-1})}
\sum_{\left(A_i\right)_{(a_i-h, h)}}
A_1 \otimes \cdots \otimes
\left(A_i\right)_{(1)} \otimes \cdots \otimes \left(A_i\right)_{(2)} A_j
\otimes \cdots \otimes A_m.
\end{multline*}

\noindent
The lowering geometric products are given by
\begin{multline*}
\diamond^{(h)}_{ij}
\left(
A_1 \otimes \cdots \otimes A_i \otimes \cdots \otimes A_j \otimes \cdots \otimes A_m
\right)
=
\\
= (-1)^{h ( a_{i+1} + \cdots + a_{j-1})}
\sum_{\left(A_i\right)_{(h, a_j - h)}}
A_1 \otimes \cdots \otimes
A_i \left(A_j\right)_{(1)} \otimes \cdots \otimes \left(A_j\right)_{(2)}
\otimes \cdots \otimes A_m.
\end{multline*}

\subsection{Whitney algebras}

Let $M = M(S)$ be a matroid of rank $n$ over a set $S.$ Let
$Skew(S)$ be the free skew-symmetric algebra over $\mathbb{Z}$ on the set $S,$ and let
$Skew(S)^{\otimes m}$ be the $m-$th tensor power algebra of the $\mathbb{Z}-$algebra $Skew(S),$
in the category of $\mathbb{Z}_2-$graded algebras.

\noindent
For each  {\it dependent} subset $\{v_1, \ldots, v_p\}$ in $M = M(S),$ we
consider the monomial $w = v_1 \cdots v_p$ in $Skew(S)$ and all
its possible {\it slices} in $Skew(S)^{\otimes m}:$
$$
\Delta_{(i_1, \ldots, i_p)}(w)
=
\sum_{(w)_(i_1, \ldots, i_p)}
w_{(1)}\otimes \cdots \otimes w_{(m)},
$$
where $i_1 + \ldots + i_m = p.$
Let $I(M)$ be the (bilateral) ideal of $Skew(S)^{\otimes m}$ generated by all these slices.

\noindent
The $m-$th {\it Whitney algebra} of $M$ is the quotient algebra
$$
W^m(M) = Skew(S)^{\otimes m}/I(M).
$$
Notice that in $W^1(M)$ words corresponding to independent subsets of $S$
that span the same flat are not necessarily equal up to a scalar.
For every tensor $u_1 \otimes u_2 \otimes \cdots \otimes u_m$ in $Skew(S)^{\otimes m},$
its image in $W^m(M)$ is denoted by $u_1 \circ u_2 \circ \cdots \circ u_m.$

\noindent
In the tensor power algebra $Skew(V)^{\otimes m},$
{\it geometric products} are defined
in strict analogy with those defined in $\Lambda(V)^{\otimes m},$ and are
still denoted by $\diamond_{ij}^{(h)},$ for every
$m = 2, 3, \ldots$ and $1\leq i, j \leq m,$ $i \neq j.$

Indeed, {\it geometric products} $\diamond_{ij}^{(h)}$ are well-defined in the Whitney algebra $W^m(M).$
In the original approach of Crapo and Schmitt \cite{crasch},
this is a non-trivial fact founded on the theory of {\it lax Hopf algebras}.
In Section 7, we show that this fact directly follows from
the letterplace encoding of Whitney algebras
in combination with
the definition of geometric products in terms of divided powers of {\it place polarization operators}.

\section{Relations among the algebras}

\subsection{From tensor powers of exterior algebras to CG-Algebras}

Let $V$ be a vector space, $dim(V) = n.$ In the exterior algebra
$\Lambda(V)$ we {\it fix} an extensor $E$ of step $n.$ $E$ is
usually called the {\it integral}. The choice of the integral
induces the choice of a bracket on $V,$ by setting:
$$
[x_1, \ldots, x_n]E = x_1 \cdots x_n, \quad for \ every \ x_1,
\ldots, x_n \in V.
$$
(Here we use juxtaposition to denote the wedge product).

Thus, the vector space $V$ becomes a Peano space $(V, [ \ ]).$ The
meet product in the CG-algebra of $(V, [ \ ])$ is recovered from
the geometric product in $\Lambda(V)^{\otimes 2}$ by means of the
following identity. Let $A$ and $B$ be extensors of steps $a$ and
$b$, respectively. We have
\begin{multline*}
\diamond_{21}^{(n-b)} (A \otimes B)
=
\sum_{(A)_{(a+b-n, n-b)}}
A_{(1)}\otimes A_{(2)}B
=
\\
\sum_{(A)_{(a+b-n, n-b)}}
A_{(1)}[A_{(2)}B]\otimes E
=
(-1)^{(a+b-n)(n-b)} (A \wedge B) \otimes E
%
\end{multline*}
Clearly,
$
\diamond_{21}^{(a)} (A \otimes B) = 1 \otimes (A \vee B).
$

\noindent
Similarly, we have
\begin{multline*}
\diamond_{12}^{(n-a)} (A \otimes B)
=
\sum_{(B)_{(n-a, a+b-n)}} AB_{(1)}\otimes B_{(2)}
\\
=
\sum_{(B)_{(n-a, a+b-n)}} E \otimes [AB_{(1)}] B_{(2)}
=
(-1)^{(a+b-n)(n-a)} E \otimes (A \wedge B)
%
\end{multline*}
Clearly,
$
\diamond_{12}^{(b)} (A \otimes B) = (A \vee B) \otimes 1.
$

\subsection{ From Whitney algebras to tensor powers of exterior algebras.}

Let $M = M(S)$ be a matroid on a finite set $S,$ and $V$ a vector
space over some field $\mathbb{K}.$ A {\it representation} of $M$ in $V$
is a mapping $ g : S \rightarrow V $ such that a set $A \subseteq
S$ is independent in $M$ iff $g$ is one-to-one on $A$ and the set
$g(A)$ is linearly independent in $V.$ A matroid that admits a
representation is said to be a {\it representable matroid}. In
this case, for every $m \in \mathbb{Z}^+,$ there is exactly one
ring morphism $\hat{g}^m : W^m(M) \rightarrow \Lambda^{\otimes
m}(V)$ such that
$$
\hat{g}^m
(1\circ \cdots \circ x \circ 1 \circ \cdots \circ 1)
=
1\otimes \cdots \otimes g(x)\otimes 1 \otimes \cdots \otimes 1
$$
for all $x$ in $S.$
Furthermore we have:
$$
\hat{g}^{m} \circ \diamond^{(h)}_{ij}
=
\diamond^{(h)}_{ij} \circ \hat{g}^{m}.
$$
for every $h \in \mathbb{Z}^+,$ and every $1\leq i, j \leq m,$
with $i \neq j.$

A matroid $M$ is representable if and only if
in each Whitney algebra $W^m(M), m = 1, 2, \ldots$ of $M$
each product
$
w_1 \circ w_2 \circ \ldots \circ w_m
$
of words $w_i$ corresponding to independent subsets of $S$
is not zero
(\cite{crasch} Corollary 8.9, p. 256).


\section{The Hodge operators}

\subsection{The Hodge $\ast-$operators}


Let $V$ be a vector space, and
let $(f_1, \ldots, f_n)$ be an ordered basis of $V.$
To each subset
$I = \{i_1,  \ldots , i_k \}$
of the set $\{1,  \ldots , n\},$
where $1 \leq i_1 < \cdots < i_k \leq n,$
corresponds a {\it canonical extensor} $f_I = f_{i_1} \vee \cdots \vee f_{i_k};$
for $I = \emptyset$ we have $f_I = 1,$ and
for $I = \{1,  \ldots , n\}$ we set $f_I = F.$
When there is no danger of misunderstanding,
we will identify a subset with the corresponding canonical extensor,
and instead of $f_I$  we will write $I.$

\noindent
The {\it Hodge star operator} associated to the
basis $(f_1, \ldots, f_n)$ is the linear operator
$\ast : \Lambda(V) \rightarrow \Lambda(V)$ whose values on the canonical
extensors are given by
$$
\ast A = \epsilon_A A',
$$
where $A'$ is the complementary subset of $A$ in $\{1,  \ldots , n\},$ and
$\epsilon_A$ is the sign implicitly defined by $F = \epsilon_A A A'.$
We have $\ast 1 = F$ and $\ast F = 1;$
notice that
$$
\ast (\ast A) = (-1)^{k(n-k)} A, \qquad k = step(A).
$$

\begin{remark}
The Hodge $\ast-$operators are defined by choosing a linearly
ordered basis of the vector space $V$. A natural question arises:
under which conditions two Hodge $\ast-$operators coincide? The
answer is a classical result.

\noindent
Let $\emph{F} = (f_1, \ldots, f_n)$ and
$\emph{F}' = (f'_1, \ldots, f'_n)$ be two ordered bases of $V,$ and
let $\ast$ and $\ast'$ be the Hodge operators associated to them, respectively.
The operators $\ast$ and $\ast'$ coincide if and only if the
transition matrix from  $\emph{F}$ to  $\emph{F}'$ is an
orthogonal matrix (see. e.g, \cite{BBR}, Theorem 6.2).
\end{remark}


We recall that the Hodge $\ast-$operators implement
the duality between join and meet
(\cite{BBR}, Theorem 6.3).
More specifically, in the exterior algebra $\Lambda(V)$ we {\it fix}
an {\it integral} $E,$ and, consequently, a bracket $[\ ].$
Thus, the vector space $V$ becomes a Peano space $(V, [ \ ]),$ and
we can consistently consider its CG-algebra.
Then we have the identities
\begin{align*}
[F] \ast (A \vee B) &=  (\ast A) \wedge (\ast B).
\\
[F]^{-1} \ast (A \wedge B) &=  (\ast A) \vee (\ast B).
\end{align*}
Now, assume that $\emph{F} = (f_1, \ldots, f_n )$ is a {\it unimodular} basis of $V,$
that is $[f_1, \ldots, f_n] = 1,$ or, equivalently,
$F = f_1 \vee \cdots \vee f_n = E.$
Then the preceding identities specialize to
\begin{align*}
\ast (A \vee B) &=  (\ast A) \wedge (\ast B).
\\
\ast (A \wedge B) &=  (\ast A) \vee (\ast B).
\end{align*}


The preceding results generalize to a result on geometric products.

\begin{theorem}
Let $\ast$ be the Hodge star operator associated to a basis of the vector space $V.$
We have the identity
$$
(\ast \otimes \ast) \diamond_{21}^{(h)} \left( A \otimes B \right)
= (-1)^{h(a+ b-n)} \diamond_{12}^{(h)} (\ast \otimes \ast) \left(
A \otimes B \right),
$$
where $A$ and $B$ are extensors of steps $a$ and $b,$ respectively.
\end{theorem}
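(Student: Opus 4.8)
The plan is to verify the identity on canonical extensors and then match the two sides term by term. Since $\diamond_{21}^{(h)}$, $\diamond_{12}^{(h)}$ and $\ast$ are all $\mathbb{K}$-linear and preserve the step grading, and every extensor of step $a$ (resp. $b$) is a linear combination of canonical extensors $f_I$ with $|I|=a$ (resp. $f_J$ with $|J|=b$), it suffices to treat $A=f_I$ and $B=f_J$. Throughout I would write, for disjoint $P,Q\subseteq\{1,\ldots,n\}$, $\mathrm{sh}(P,Q)$ for the sign fixed by $f_P\vee f_Q=\mathrm{sh}(P,Q)\,f_{P\cup Q}$, and recall $\ast f_K=\epsilon_K f_{K^c}$ with $F=\epsilon_K f_K f_{K^c}$.

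First I would expand the left-hand side. Slicing $f_I$ with the exterior coproduct gives $\diamond_{21}^{(h)}(f_I\otimes f_J)=\sum_T \mathrm{sh}(I\setminus T,T)\,f_{I\setminus T}\otimes(f_T\vee f_J)$, where $T$ runs over $h$-subsets of $I$. The factor $f_T\vee f_J$ vanishes unless $T\cap J=\emptyset$, so only $h$-subsets $T\subseteq I\setminus J$ survive, and for these $f_T\vee f_J=\mathrm{sh}(T,J)\,f_{T\cup J}$. Applying $\ast\otimes\ast$ and using $(I\setminus T)^c=I^c\cup T$ and $(T\cup J)^c=J^c\setminus T$, the left-hand side becomes a sum over $h$-subsets $T\subseteq I\setminus J$ of scalar multiples of $f_{I^c\cup T}\otimes f_{J^c\setminus T}$. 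Dually, for the right-hand side one has $(\ast\otimes\ast)(f_I\otimes f_J)=\epsilon_I\epsilon_J\,f_{I^c}\otimes f_{J^c}$, and $\diamond_{12}^{(h)}$ slices the step-$(n-b)$ factor $f_{J^c}$ into a block $C_1$ of size $h$ which is wedged on the left of $f_{I^c}$; the product $f_{I^c}\vee f_{C_1}$ vanishes unless $C_1\subseteq I$, which together with $C_1\subseteq J^c$ forces $C_1\subseteq I\setminus J$. Hence the right-hand side is indexed by exactly the same family of subsets via $C_1=T$, each term being again a multiple of $f_{I^c\cup T}\otimes f_{J^c\setminus T}$. (Note $|I\setminus J|=a-|I\cap J|\le n-b$, so the constraint $h\le|I\setminus J|$ coincides with the requirement $h\le n-b$ needed to slice $f_{J^c}$, and both sides vanish together when $h$ is too large.)

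Because the two expansions run over the same index set and produce the same basis tensors, the theorem reduces to the per-term scalar identity
\[
\mathrm{sh}(I\setminus T,T)\,\mathrm{sh}(T,J)\,\epsilon_{I\setminus T}\,\epsilon_{T\cup J}
=(-1)^{h(a+b-n)}\,\epsilon_I\,\epsilon_J\,\mathrm{sh}(I^c,T)\,\mathrm{sh}(T,J^c\setminus T),
\]
to be checked for every $h$-subset $T\subseteq I\setminus J$.

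Verifying this sign identity is the step I expect to be the main obstacle. I would handle it by decomposing $\{1,\ldots,n\}$ into the five disjoint blocks $T$, $(I\setminus J)\setminus T$, $I\cap J$, $J\setminus I$, $(I\cup J)^c$ into which $I$, $J$ and $T$ resolve, rewriting every $\mathrm{sh}(\cdot,\cdot)$ and every $\epsilon_K$ (through $F=\epsilon_K f_K f_{K^c}$) as a shuffle sign among these blocks, and comparing the two assemblies of $F$. The residual discrepancy should collapse to $(-1)^{h(a+b-n)}$; here the exponent is best read via $a+b-n=|I\cap J|-|(I\cup J)^c|$, so that it records the graded cost of transposing the $h$-block $T$ across the overlap $I\cap J$ and the coatom $(I\cup J)^c$ on the two sides — precisely the $\mathbb{Z}_2$-graded bookkeeping already seen in the passage from $\diamond$ to the meet in Subsection 4.1. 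Conceptually this is the Hodge duality exchanging rising and lowering geometric products, generalizing the join–meet duality $\ast(A\vee B)=(\ast A)\wedge(\ast B)$ recalled above.
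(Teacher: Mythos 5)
Your proposal is correct and follows essentially the same route as the paper: both expand the two sides on canonical extensors, observe that they are indexed by the same $h$-subsets ($your$ $T\subseteq I\setminus J$ is the paper's $H\subseteq A\cap B'$) producing the same basis tensors $f_{I^c\cup T}\otimes f_{J^c\setminus T}$, and reduce the theorem to the per-term sign identity, which after translating notation is exactly the paper's $\zeta'_H \zeta''_H \epsilon_{B\cup H} \epsilon_{A\cap H'} = (-1)^{h(a+b-n)} \epsilon_A \epsilon_B \eta'_H \eta''_H$. The paper likewise leaves this last step as ``a rather tedious sign computation,'' so your sketched block-decomposition check is at the same level of completeness as the published argument.
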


\begin{proof}
On the one hand, we have
\begin{multline*}
(\ast \otimes \ast) \diamond_{21}^{(h)} \left( A \otimes B \right)
= (\ast \otimes \ast) \sum_H \zeta'_H \zeta''_H (A \cap H')
\otimes (B \cup H),
\\
= \sum_H \zeta'_H \zeta''_H \epsilon_{A \cap H'} \epsilon_{B
\cup H} (A' \cup H) \otimes (B' \cap H')
\end{multline*}
where $H$ runs through the $h-$subsets of $A \cap B'$ and
$$
A = \zeta'_H (A \cap H') H,
\qquad
H B = \zeta''_H (B \cup H).
$$
On the other hand, we have
\begin{multline*}
\diamond_{12}^{(h)} (\ast \otimes \ast) \left( A \otimes B \right)
= \diamond_{12}^{(h)} \left( \epsilon_A \epsilon_{B} A' \otimes
B' \right)
\\
= \sum_H \epsilon_A \epsilon_{B} \eta'_H \eta''_H (A' \cup H)
\otimes (B' \cap H'),
\end{multline*}
where $H$ runs through the $h-$subsets of $B' \cap A$ and
$$
B' = \eta'_H H (B' \cap H')
\qquad
A' H = \eta''_H (A' \cup H).
$$
A rather tedious sign computation gives
$$
\zeta'_H \zeta''_H \epsilon_{B\cup H} \epsilon_{A\cap H'} =
(-1)^{h(a+ b-n)} \epsilon_A \epsilon_B \eta'_H \eta''_H.
$$
\end{proof}

In the last part of this subsection we will describe the way to
specialize  the preceding Theorem in order to obtain the main
result that relates Hodge operators to join and meet in
CG-algebras.

Let $V$ be a vector space, $dim(V) = n.$
In the exterior algebra $\Lambda(V)$ we {\it fix} an {\it integral} $E,$ and,
consequently, a bracket $[ \ ].$ Thus, the vector space $V$
becomes a Peano space $(V, [ \ ]),$ and we can consistently
consider its CG-algebra.

\noindent
In the previous Theorem, by setting $h = a,$ the step of $A,$
we have that the left hand side becomes
$$
(\ast \otimes \ast) \diamond_{21}^{(a)}
\left(A \otimes B \right)
=
(\ast \otimes \ast)
\left(1 \otimes (A \vee B) \right)
=
F \otimes \ast (A \vee B)
=
[F]E \otimes \ast (A \vee B),
$$
the right hand side becomes
$$
\diamond_{12}^{(a)} (\ast \otimes \ast)
\left(A \otimes B \right)
=
\diamond_{12}^{(a)}
\left((\ast A) \otimes (\ast B) \right)
=
(-1)^{a(a+ b-n)}  E \otimes ((\ast A) \wedge (\ast B)),
$$
thus we get
$$
[F] \ast (A \vee B) =  (\ast A) \wedge (\ast B).
$$
In the previous Theorem, by setting $h = n -b,$ the costep of $B,$
we have that the left hand side becomes
\begin{multline*}
(\ast \otimes \ast) \diamond_{21}^{(n - b)}
\left( A \otimes B \right)
=
(-1)^{(a+ b-n)(n - b)}
(\ast \otimes \ast)
\left((A \wedge B) \otimes E \right)
\\
=
(-1)^{(a+ b-n)(n - b)} [F]^{-1}
\left(\ast (A \wedge B) \otimes 1 \right),
\end{multline*}
the right hand side becomes
$$
\diamond_{12}^{(n - b)} (\ast \otimes \ast)
\left(A \otimes B \right)
=
\diamond_{12}^{(n - b)}
\left( (\ast A) \otimes (\ast B) \right)
=
\left( (\ast A) \vee (\ast B) \right) \otimes 1,
$$
thus we get
$$
[F]^{-1}   \ast (A \wedge B)  =  (\ast A) \vee (\ast B).
$$

\begin{remark}
From Remark 1, it follows that the Hodge $\ast-$operator induces
an antitone mapping on the lattice of subspaces.
Indeed, by setting in the previous Theorem $h=1,$ we have
$$
(\ast \otimes \ast) \diamond_{21}^{(1)} \left( A \otimes B \right)
= (-1)^{(a+ b-n)} \diamond_{12}^{(1)} (\ast \otimes \ast) \left( A
\otimes B \right).
$$
The left hand side equals zero if and only if $\overline{A} \subseteq
\overline{B};$ the right hand side equals zero if and only if
$\overline{\ast A} \supseteq \overline{\ast B}.$
Thus,
$$
\overline{A} \subseteq \overline{B} \quad if \ and \ only \ if \quad
\overline{\ast A} \supseteq \overline{\ast B}.
$$
\end{remark}

\subsection{The generalized Hodge operators}


To each tensor $t$ in the tensor product of two vector spaces,
there are associated two spaces $L_t,$ $R_t,$ its left and right spans,
a class of isomorphism $L_t \rightarrow R_t,$ and
a canonical nonsingular bilinear map $L_t \times R_t \rightarrow \mathbb{K}.$
We will exploit these constructions and invariants
for the geometric product of two extensors in $\Lambda(V),$
which is a tensor in $\Lambda(V) \otimes \Lambda(V).$

Let $(f_1, \ldots,  f_n)$ be a linearly ordered basis of a vector space $V,$
set $F = f_1 \vee \cdots \vee f_n,$ and let $[\ ]$ be the corresponding bracket.
Consider the tensor
$$
\left(\sum_{h=0}^n \diamond_{21}^{(h)}\right) (F\otimes 1)
=
\sum_{(F)} F_{(1)}\otimes F_{(2)},
$$
and notice that:
the left span and the right span are $\Lambda(V);$
for any minimal representation in which $F_{(1)}$ and $F_{(2)}$ are subwords of $F,$
the associated linear map $\Lambda(V) \rightarrow \Lambda(V)$ is
the Hodge $\ast-$operator associated to the basis $(f_1, \ldots,  f_n);$
the canonical bilinear mapping $\Lambda(V) \times \Lambda(V) \rightarrow \mathbb{K}$ is the {\it cap-product,}
defined by by setting $X \times Y \mapsto [XY],$ for any $X, Y$
extensors of complementary steps, and $X \times Y \mapsto 0$
otherwise.


In the following, we will see how these facts generalize to any geometric product.
Consider two extensors $A, B.$ Let $A = CA'$ be a factorization of
$A$ as the product of two extensors, where $C$ is an extensor such
that $\overline{C} = \overline{A} \frown \overline{B},$ and set
$D = A'B,$ so that $\overline{D} = \overline{A} \smile \overline{B}.$
Denote by $p$ the step of $A'.$

Consider the tensor
$$
\left(\sum_{h=0}^n \diamond_{21}^{(h)}\right) (A\otimes B)
=
\sum_{(A')} CA'_{(1)}\otimes A'_{(2)}B
$$
where
$$
\Delta (A') = \sum_{(A')} A'_{(1)}\otimes A'_{(2)}.
$$
Choose a minimal representation of $\Delta(A'),$
by representing the extensor $A'$ as an exterior product
$
a_1 \vee \cdots \vee a_p,
$
$a_i \in V,$ and writing it as
$$
\Delta(A') = \sum_{i=0}^r \epsilon_i A'_{i,(1)}\otimes A'_{i,(2)},
$$
where $A'_{i,(1)}$ and $A'_{i,(2)}$ are {\it increasing subwords}
of the word $a_1 \vee \cdots \vee a_p$ (in the preceding coproduct
expression, we explicitly write the sign of the summands; it is a
slight modification of the Sweedler notation that will turn to be
useful in the subsequent part of this paragraph. The symbol
$\epsilon_i$ represents a {\it sign}, whose meaning should be
obvious).

\noindent
The representation
$$
\left(\sum_{h=0}^n \diamond_{21}^{(h)}\right) (A\otimes B) =
\sum_{i = 1}^r \epsilon_i CA'_{i,(1)}\otimes A'_{i,(2)}B \quad
(\dag)
$$
is independent, thus minimal. Thus we have:

\begin{enumerate}

\item
The left span
is
$
L
=
\left\langle
CA'_{i,(1)};\ i = 1, \ldots, r
\right\rangle
$
which is the linear span of all the extensors
that represent subspaces in the interval
$[\overline{A} \frown \overline{B}, \overline{A}].$

\noindent
The right span
is
$
R
=
\left\langle
\epsilon_i  A'_{i,(2)}B;\ i = 1, \ldots, r
\right\rangle,
$
which is the linear span of all the extensors
that represent subspaces in the interval
$[\overline{B}, \overline{A} \smile \overline{B}].$

\item
The {\it generalized Hodge $\ast_{\dag}-$operator} associated to
the minimal representation $(\dag)$ is defined to be the linear
operator
$$
\ast_{\dag} : L \rightarrow R,
\qquad
\ast_{\dag} :  CA'_{i,(1)} \mapsto \epsilon_i A'_{i,(2)}B,
\quad i = 1,\ldots, r.
$$
Consider the factorization
$$
\ast_{\dag} = \varphi \circ \widetilde{\ast}.
$$
where
\begin{align*}
\widetilde{\ast} &: L \rightarrow L,
\qquad
\widetilde{\ast} :  CA'_{i,(1)} \mapsto \epsilon_i CA'_{i,(2)},
\quad i = 1,\ldots, r,
\\
\varphi &: L \rightarrow R,
\qquad
\varphi :  CH' \mapsto H'B.
\end{align*}
Notice that: the linear map $\widetilde{\ast}$ is antitone, since
it can be identified with the Hodge $\ast-$operator
$$
\ast : \Lambda\left(\overline{A'}\right) \rightarrow \Lambda\left(\overline{A'}\right),
\qquad
\ast : A'_{i,(1)} \mapsto \epsilon_i  A'_{i,(2)};
$$
the linear map $\varphi$ is monotone.
Thus the generalized Hodge $\ast_{\dag}-$operator
$\ast_{\dag} = \varphi \circ \widetilde{\ast}$
is antitone.

\item
The canonical pairing
$\beta : L \times R \rightarrow \mathbb{K}$
is defined by
$$
\beta \left( CA'_{i,(1)},\ \epsilon_j A'_{j,(2)}B \right)
=
\delta_{ij}, \qquad i,j= 1, \ldots, r.
$$
We have also the following intrinsic characterization of $\beta.$

\noindent
Let $X$ be an extensor in $L,$
let $Y$ be an extensor in $R,$ and
denote by $k$ the relative step of $X$ with respect to $C.$
Then
$$
\diamond^{(k)}_{21} (X \otimes Y) = \beta(X, Y)  C \otimes D.
$$
Indeed, for $X = CA'_{i,(1)}$ and $Y = \epsilon_j A'_{j,(2)}B,$
we have
$$
\diamond^{(k)}_{21}
(CA'_{i,(1)} \otimes \epsilon_j A'_{j,(2)}B)
=
\epsilon_j
C \otimes A'_{i,(1)}A'_{j,(2)}B
=
\delta_{ij}  C \otimes D.
$$
\end{enumerate}

\section{Letterplace algebras and polarization operators}

The algebras $\Lambda(V)^{\otimes m}$ and $W^m(M)$ and their
geometric products admit natural and efficient descriptions in
terms of (skew-symmetric) {\it Letterplace algebras} and {\it
place polarization operators} (see, e.g. \cite{brini}).

\subsection{Skew-symmetric Letterplace algebras and place polarization operators.}

First of all, take
a set of symbols $\emph{L}$ and
the set $\underline{m} = \{1, 2, \ldots, m\}$ of the first $m$ positive integers;
the  set $[\emph{L}|\underline{m}] = \{ (x|i); x \in \emph{L}, \ i \in \underline{m} \}$ is called
the set of {\it letterplace variables}.
The elements of $\emph{L}$ are called {\it letters},
the elements of $\underline{m}$ are called {\it places}.

The skew-symmetric Letterplace algebra
$
Skew[\emph{L}|\underline{m}]
$
is the free associative skew-symmetric (unitary) $\mathbb{Z}-$algebra
generated by the set $[\emph{L}|\underline{m}].$

Given $h, k \in \underline{m},$
the {\it place polarization} operator from $h$ to $k$ is
the unique {\it derivation} $\emph{D}_{kh}$ of $Skew[\emph{L}|\underline{m}]$ such that
$$
\emph{D}_{kh} (x|i)  = \delta_{hi} (x|k),
$$
for every  $x \in \emph{L},$ $i \in \underline{m}.$
In particular, the action of a polarization on a monomial is given by
\begin{multline*}
\emph{D}_{kh} \left( (x_{i_1}|j_1) (x_{i_2}|j_2) \cdots
(x_{i_n}|j_n) \right) =
\\
= \sum \delta_{h j_r} (x_{i_1}|j_1) \cdots (x_{i_{r-1}}|j_{r-1})
(x_{i_r}|k) (x_{i_{r+1}}|j_{r+1}) \cdots (x_{i_n}|j_n).
\end{multline*}

\noindent
The place polarization operators satisfy the commutation relations
$$
\emph{D}_{kh}\emph{D}_{ji} -
\emph{D}_{ji}\emph{D}_{kh}
=
\delta_{hj} \emph{D}_{ki} - \delta_{ki} \emph{D}_{jh},
$$
thus they implement a representation of
the general linear Lie algebra $gl(m, \mathbb{Z})$ on
$Skew[\emph{L}|\underline{m}].$

\subsection{Biproducts in skew-symmetric letterplace algebras and
divided powers of polarization operators}

We provide a brief description of the divided power notation for
the {\it biproducts}
$$
(w|i_1^{(q_1)} i_2^{(q_2)} \cdots  i_p^{(q_p)}),
\qquad
w\ a\ word\ on\ \emph{L},
\quad
i_1, \ldots, i_p \in \underline{m},
$$
in the skew-symmetric letterplace algebra
$Skew[\emph{L}|\underline{m}].$
In the following, we will denote the length of a word $w \in Mon[\emph{L}]$ by the symbol $|w|.$
\begin{itemize}
\item
Let $p, q \in \mathbb{Z}^+,$
$x_1, x_2, \ldots, x_p \in \emph{L},$
$i \in \underline{m}.$
Then
$$
(x_1 x_2 \cdots x_p|i^{(q)})
=
(x_1|i)(x_2|i) \cdots (x_p|i),
$$
if $p = q,$ and is zero otherwise.
We also use the convention that
$$
(w|i^{(0)})
=
\mathbf{1},
\quad
\mathbf{1}\ the\ unity\ of\ the\ algebra,
$$
if $w$ is the empty word, and equals zero otherwise.

\item
(Laplace expansion)
Let $w$ be a word on the alphabet $\emph{L}.$
Let $q_1, \ldots, q_m$ be nonnegative integers.
Then
$$
(w|i_1^{(q_1)} i_2^{(q_2)}\cdots i_p^{(q_p)})
=
\sum_{(w)_{(q_1,\ldots, q_p)}} (w_{(1)}|i_1^{(q_1)}) (w_{(2)}|i_2^{(q_2)})\cdots (w_{(p)}|i_p^{(q_p)}),
$$
if $|w| = q_1 + \cdots + q_p,$ and is  zero otherwise.
\end{itemize}

\noindent
The biproducts are skew-symmetric in the letters:
$$
(\cdots x_i\cdots x_j\cdots|i_1^{(q_1)}\cdots i_p^{(q_p)})
=
-
(\cdots x_j\cdots x_i\cdots|i_1^{(q_1)}\cdots i_p^{(q_p)}),
$$
and symmetric in the places:
$$
(x_1 \cdots  x_t|\cdots i_s^{(q_s)} \cdots i_t^{(q_t)} \cdots)
=
(x_1 \cdots  x_t|\cdots i_t^{(q_t)} \cdots i_s^{(q_s)} \cdots);
$$
thus, in particular, each biproduct can be written in the form
$$
(w|1^{(q_1)} 2^{(q_2)} \cdots m^{(q_m)}).
$$

\begin{remark}
\noindent
The biproducts satisfy the anticommutation relations
$$
(w|i_1^{(p_1)} \cdots  i_s^{(p_s)})
(w'|j_1^{(q_1)} \cdots j_t^{(q_t)})
=
(-1)^{pq}
(w'|j_1^{(q_1)} \cdots  j_t^{(q_t)})
(w|i_1^{(p_1)} \cdots  i_s^{(p_s)}),
$$
where $p = \sum p_i$ and $q = \sum q_i.$
\end{remark}

Now we consider the action of place polarization operators on biproducts.

\begin{proposition}
For each nonnegative integer $h$ and$1\leq i <j \leq m,$ we have
$$
\emph{D}^{h}_{ji} (w|\cdots i^{(q_i)} \cdots j^{(q_j)} \cdots)
=
\left\{
{
{
\frac {(q_{j} + h)!} {q_j!} (w|\cdots i^{(q_i - h)}
\cdots j^{(q_j + h)} \cdots)
}
\atop
\
0
}
\right.
$$
according to $h \leq q_i,$ or $h > q_i.$
An analogous result holds for $i>j.$
\end{proposition}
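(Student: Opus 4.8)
The plan is to reduce the computation of the divided power $D_{ji}^{h}$ to a single application of $D_{ji}$, which I then iterate. First I would exploit that, by its defining property, $D_{ji}$ is a derivation with $D_{ji}(x|l)=\delta_{il}(x|j)$, so it annihilates every letterplace variable whose place is different from $i$. Hence only the letters currently sitting in place $i$ are moved; all other letterplace variables, those of place $j$ included, are killed when differentiated and otherwise merely ride along, the accompanying signs being governed by the skew-symmetry in the letters and by the anticommutation relations of Remark 2. Using the Laplace expansion to factor off the blocks of places other than $i$ and $j$, this reduces the entire statement to the two-place situation, i.e. to understanding $D_{ji}$ on $(w|i^{(q_i)} j^{(q_j)})$.

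The heart of the matter is then the single-step identity
$$
D_{ji}(w|\cdots i^{(q_i)} \cdots j^{(q_j)} \cdots) = (q_j+1)\,(w|\cdots i^{(q_i-1)} \cdots j^{(q_j+1)} \cdots).
$$
To prove it I would peel off the place-$i$ block by Laplace, write it as $(x_1|i)\cdots(x_{q_i}|i)$, and apply the Leibniz rule so that the derivation moves each of these letters in turn from place $i$ to place $j$. A short sign count using skew-symmetry in the letters shows that the sum of these $q_i$ summands reassembles into a biproduct carrying one extra place-$j$ letter, with coefficient $1$ at this stage. The factor $q_j+1$ appears only when this freshly created singleton place-$j$ block is merged with the pre-existing block $j^{(q_j)}$: by coassociativity of the Laplace expansion this merge is precisely the divided-power multiplication, which contributes the binomial coefficient $\binom{q_j+1}{1}=q_j+1$. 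Equivalently, one checks directly that in the expansion of the left-hand side each monomial of the target biproduct is produced exactly $q_j+1$ times, once for each of the $q_j+1$ letters now in place $j$ that could have been the one just polarized.

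With the single-step identity available, the general case follows by induction on $h$. Applying it to $D_{ji}^{h}=D_{ji}\circ D_{ji}^{h-1}$ turns the coefficient $\frac{(q_j+h-1)!}{q_j!}$ into $(q_j+h)\cdot\frac{(q_j+h-1)!}{q_j!}=\frac{(q_j+h)!}{q_j!}$, while the place-$i$ exponent decreases from $q_i-(h-1)$ to $q_i-h$. The vanishing for $h>q_i$ is automatic from the same recursion: after $q_i$ steps the place-$i$ block is empty, so a further application of $D_{ji}$ finds no place-$i$ letter to move and returns $0$, in agreement with the convention that a biproduct with negative place exponent is zero. The case $i>j$ is handled identically after interchanging the roles of the two places and adjusting the signs.

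The step I expect to be the genuine obstacle is the sign bookkeeping inside the single-step identity: one must verify that the $q_j+1$ contributions to each target monomial all carry the \emph{same} sign, and therefore add rather than cancel. This is the only point where the skew-symmetry of the letters and the place-symmetry of the biproducts have to be tracked simultaneously, and it is the analogue here of the ``rather tedious sign computation'' met in the proof of Theorem~1.
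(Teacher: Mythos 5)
Your proposal is correct, but there is nothing in the paper to compare it against: the authors state this Proposition without proof, treating it as a known fact from the letterplace-superalgebra literature (Grosshans--Rota--Stein), and pass immediately to the definition of divided powers and the Corollary. Your argument supplies the missing proof, and its structure is the natural one: reduce to the place-$i$ block using that $D_{ji}$ is an (even, unsigned) derivation annihilating every variable of place $\neq i$, prove the single-step identity with coefficient $q_j+1$, then induct on $h$. The sign issue you flag as the ``genuine obstacle'' in fact resolves more cleanly than you fear, and it is worth recording why: writing $(u|i^{(q_i)})=(x_1|i)\cdots(x_{q_i}|i)$, the Leibniz rule gives $\sum_r (x_1|i)\cdots(x_r|j)\cdots(x_{q_i}|i)$, and moving $(x_r|j)$ to the end costs $(-1)^{q_i-r}$, which is exactly the sign of the coproduct slice $u_{(1)}\otimes u_{(2)}=x_1\cdots\hat{x}_r\cdots x_{q_i}\otimes x_r$; hence $D_{ji}(u|i^{(q_i)})=\sum_{(u)_{(q_i-1,1)}}(u_{(1)}|i^{(q_i-1)})(u_{(2)}|j)$ with no extraneous signs. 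The multiplicity then comes from the merge identity
\begin{equation*}
\sum_{(v)_{(1,q_j)}}(v_{(1)}|j^{(1)})(v_{(2)}|j^{(q_j)})=(q_j+1)\,(v|j^{(q_j+1)}),\qquad |v|=q_j+1,
\end{equation*}
where again the slice sign $(-1)^{s-1}$ cancels against the sign of reinserting $(y_s|j)$ into its position, so all $q_j+1$ contributions carry the same sign, exactly as you claimed. With that, your induction on $h$ and the vanishing for $h>q_i$ are immediate, and the case $i>j$ follows by place symmetry of the biproducts.
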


\noindent
Let $i, j \in {\underline m},$ with $i \neq j.$
Given a nonnegative integer $h,$ the {\it $h-$th divided power}
$
\emph{D}^{(h)}_{ji}
$
of the place polarization
$
\emph{D}^{(h)}_{ji}
$
is, by definition, the operator
$$
\emph{D}^{(h)}_{ji} = \frac {\emph{D}^{(h)}_{ji}} {h!}.
$$
We claim that the next corollary implies that
the action of a divided power of a place polarization operator
is well-defined on the algebra $Skew[\emph{L}|\underline{m}].$

\begin{corollary}
For each natural integer $h$ and$1\leq i <j \leq m,$ we have
$$
\emph{D}^{(h)}_{ji} (w|\cdots i^{(q_i)} \cdots j^{(q_j)} \cdots)
=
\left\{
{
{
{{q_{j} + h} \choose q_j} (w|\cdots i^{(q_i - h)} \cdots j^{(q_j + h)} \cdots)
}
\atop
\
0
}
\right.
$$
according to $h \leq q_i,$ or $h > q_i.$ In particular,
$$
\emph{D}^{(h)}_{ji} (w|k^{(q)})
= \delta_{ik} (w|i^{(q-h)}j^{(h)}),
$$
An analogous result holds for $i>j.$
\end{corollary}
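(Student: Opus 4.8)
The plan is to obtain the corollary as an immediate consequence of the preceding Proposition, simply by dividing the stated formula by $h!$. Recall that the $h$-th divided power is defined by $D^{(h)}_{ji} = \frac{1}{h!}\, D^{h}_{ji}$. Hence I would start from the formula of the preceding Proposition, which asserts that for $h \leq q_i$ one has
$$
D^{h}_{ji} (w|\cdots i^{(q_i)} \cdots j^{(q_j)} \cdots)
=
\frac{(q_j + h)!}{q_j!}\, (w|\cdots i^{(q_i - h)} \cdots j^{(q_j + h)} \cdots),
$$
and zero otherwise.

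Dividing both sides by $h!$ produces the coefficient $\frac{(q_j+h)!}{h!\, q_j!}$, which is precisely the binomial coefficient $\binom{q_j+h}{q_j}$. This yields the first assertion of the corollary, while the vanishing for $h > q_i$ is inherited unchanged. The point genuinely worth stressing is \emph{integrality}: although the divided power is defined by a formal division by $h!$, the resulting coefficient $\binom{q_j+h}{q_j}$ is an integer. This is exactly what guarantees (as claimed just before the corollary) that $D^{(h)}_{ji}$ is well-defined as an operator on the $\mathbb{Z}$-algebra $Skew[\emph{L}|\underline{m}]$, notwithstanding the fact that this algebra admits no division. The analogous statement for $i > j$ follows by the identical computation, appealing to the corresponding case of the Proposition.

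For the ``in particular'' clause I would specialize to a biproduct $(w|k^{(q)})$ in which only the place $k$ is occupied. If $k = i$, then $q_i = q$ and $q_j = 0$, so the coefficient reduces to $\binom{h}{0} = 1$ and one obtains $(w|i^{(q-h)}j^{(h)})$, provided $h \leq q$; if $k \neq i$, then the place $i$ is empty, so $q_i = 0 < h$ for $h \geq 1$ and the result is zero. Both cases are recorded by the single factor $\delta_{ik}$.

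As for obstacles, there is essentially no analytic difficulty: the entire content lies in the algebraic simplification $\frac{(q_j+h)!}{h!\, q_j!} = \binom{q_j+h}{q_j}$ together with the observation that this number is an integer. The only genuine point of care is the bookkeeping in the boundary case $h = 0$ of the ``in particular'' formula, where $D^{(0)}_{ji}$ acts as the identity; I would restrict attention to $h \geq 1$ (or invoke the convention $(w|i^{(q)}j^{(0)}) = (w|i^{(q)})$) so that the Kronecker-delta form is read correctly.
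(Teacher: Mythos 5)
Your proposal is correct and coincides with the derivation the paper intends: the Corollary is stated as an immediate consequence of the preceding Proposition, obtained precisely by dividing by $h!$ and recognizing $\frac{(q_j+h)!}{h!\,q_j!}=\binom{q_j+h}{q_j}$, whose integrality is what justifies the claim that divided powers are well-defined over $\mathbb{Z}$. Your handling of the $\delta_{ik}$ specialization and the $h=0$ boundary remark are consistent with the paper's conventions.
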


\subsection{The Straightening Law}

In the context of the skew-symmetric Letterplace algebra $Skew[\emph{L}|\underline{m}],$
the supersymmetric Straightening Law of Grosshans, Rota and Stein \cite{GRS}
can be stated as follows.

\begin{theorem}
Let $u, v, w$ be words on $\emph{L};$ let $p_1, \ldots, p_m$ and
$q_1, \ldots, q_m,$ be $m-$tuples of nonnegative integers, and set
$\sum p_i = p$ and $\sum q_i = q.$ Then
\begin{multline*}
\sum_{(v)}
(u v_{(1)}|1^{(p_1)} \cdots m^{(p_m)})
(v_{(2)} w|1^{(q_1)} \cdots m^{(q_m)}) =
(-1)^{|u||v|} \times
\\
\sum_{(u), r}
(-1)^{|u_{(2)}|} c_r
(v u_{(1)}|1^{(p_1 + r_1)}\cdots m^{(p_m + r_m)})
(u_{(2)} w|1^{(q_1 - r_1)} \cdots m^{(q_m - r_m)}),
\end{multline*}
where
$c_r = \prod_i {{p_i + r_i} \choose {r_i}}.$
The sum in the left-hand side is extended to all the slices of the
word $v;$
the sum in the right-hand side is extended to all the slices of
the word $u,$ and to all the $m-$tuples $r_1, \ldots, r_m$ such
that $0\leq r_i \leq q_i.$
\end{theorem}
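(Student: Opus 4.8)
The plan is to prove the identity by induction on the length $|v|$ of the middle word, keeping $u$, $w$ and the place-multiplicities $\mathbf p=(p_1,\ldots,p_m)$, $\mathbf q=(q_1,\ldots,q_m)$ arbitrary. Throughout I abbreviate $(W|\mathbf s):=(W|1^{(s_1)}\cdots m^{(s_m)})$ and write $\mathcal L(u,v,w)$, $\mathcal R(u,v,w)$ for the two sides of the asserted equality (with $\mathbf p,\mathbf q$ fixed). The whole argument rests on two facts already available: the Laplace expansion of biproducts, and the observation that the places carry a \emph{divided-power} structure whose multiplication constant is precisely $c_r=\prod_i\binom{p_i+r_i}{r_i}$ --- the very binomial that governs the action of the divided powers $D^{(h)}_{ji}$ in the Corollary.

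Base layer. For $v=\emptyset$ the only slice is $v_{(1)}=v_{(2)}=\emptyset$, while on the right-hand side a weight count using $\sum_ip_i=p$ and $\sum_iq_i=q$ gives $|u_{(2)}|=-\sum_ir_i\le 0$; hence only $r=0$, $u_{(2)}=\emptyset$ survives and both sides collapse to $(u|\mathbf p)(w|\mathbf q)$. The genuine content hidden here is the \emph{place-comultiplication} identity $(vw|\mathbf q)=\sum_r(v|\mathbf r)(w|\mathbf q-\mathbf r)$ --- exactly the $u=\emptyset$ specialization --- which I would derive directly from the Laplace expansion together with the fact that the word-coproduct $\Delta$ is an algebra map, the transposition signs being absorbed by the skew-symmetry of the biproducts.

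Inductive step. Writing $v=v'x$ and using $\Delta(v'x)=\Delta(v')(x\otimes 1+1\otimes x)$ to split the coproduct slices, $\mathcal L(u,v'x,w)$ breaks, according to whether $x$ is attached to the left or to the right word, into
$$
\sum_{(v')}(-1)^{|v'_{(2)}|}(u v'_{(1)} x|\mathbf p)(v'_{(2)} w|\mathbf q)\;+\;\sum_{(v')}(u v'_{(1)}|\mathbf p)(v'_{(2)} x w|\mathbf q).
$$
The second sum is $\mathcal L(u,v',xw)$; the first, after moving $x$ leftward past $v'_{(1)}$ by skew-symmetry (the extra $(-1)^{|v'_{(1)}|}$ combining with $(-1)^{|v'_{(2)}|}$ to give $(-1)^{|v'|}$), equals $(-1)^{|v'|}\mathcal L(ux,v',w)$. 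Thus
$$
\mathcal L(u,v'x,w)=(-1)^{|v'|}\mathcal L(ux,v',w)+\mathcal L(u,v',xw),
$$
and since $|v'|<|v|$ the induction hypothesis replaces each $\mathcal L$ on the right by the corresponding $\mathcal R$. It then remains to verify the purely right-hand-side recursion $\mathcal R(u,v'x,w)=(-1)^{|v'|}\mathcal R(ux,v',w)+\mathcal R(u,v',xw)$, which I would obtain by expanding the left biproduct $(v'x\,u_{(1)}|\mathbf p+\mathbf r)$, re-slicing, and collapsing the binomials by Pascal's rule $\binom{p_i+r_i}{r_i}=\binom{p_i+r_i-1}{r_i}+\binom{p_i+r_i-1}{r_i-1}$ (again the divided-power multiplication), using the place-comultiplication identity of the base layer for the bookkeeping of $w$.

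The conceptual skeleton is short; the real difficulty --- and the step I expect to be the main obstacle --- is the sign and coefficient accounting in the inductive step: reconciling the three signs $(-1)^{|u||v|}$, $(-1)^{|u_{(2)}|}$ and the increasing-subword signs thrown off by the Laplace expansion, and confirming that the coefficients $c_r$ genuinely assemble by Pascal/Vandermonde rather than merely appearing to. A cleaner but less self-contained alternative would be to note that the displayed equality is simply the letterplace transcription of the Grosshans--Rota--Stein supersymmetric straightening law \cite{GRS}: matching the biproducts $(w|1^{(q_1)}\cdots m^{(q_m)})$ with the corresponding bideterminants makes the $c_r$ appear automatically from the divided-power (place-symmetric) side of that calculus.
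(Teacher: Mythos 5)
Your proposal has a genuine gap, and it sits exactly where you predicted trouble would \emph{not} be: in the base case, not in the inductive step. Your weight count for $v=\emptyset$ tacitly assumes $|u|=p$ and $|w|=q$; only under that assumption does $|u_{(1)}|=p+\sum_i r_i$ force $|u_{(2)}|=-\sum_i r_i\le 0$. But the theorem is asserted for arbitrary words and arbitrary exponents, and your own induction unavoidably generates unbalanced instances, since each application of your recursion $\mathcal{L}(u,v'x,w)=(-1)^{|v'|}\mathcal{L}(ux,v',w)+\mathcal{L}(u,v',xw)$ lengthens $u$ or $w$ while $\mathbf{p},\mathbf{q}$ stay fixed. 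When $v=\emptyset$ and $|u|>p$, the left-hand side vanishes (its single term has the factor $(u|1^{(p_1)}\cdots m^{(p_m)})=0$), but the right-hand side does \emph{not} vanish term by term: it vanishes only through pairwise cancellation. Concretely, take $m=1$, $u=x$, $v=w=\emptyset$, $p_1=0$, $q_1=1$. The left side is $(x|1^{(0)})(\emptyset|1^{(1)})=0$, while the right side is
$$
\binom{1}{1}(x|1^{(1)})(\emptyset|1^{(0)})\;-\;(\emptyset|1^{(0)})(x|1^{(1)})\;=\;(x|1)-(x|1),
$$
the two surviving terms being $(u_{(1)},u_{(2)},r)=(x,\emptyset,1)$ and $(\emptyset,x,0)$. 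So the claim that ``only $r=0$, $u_{(2)}=\emptyset$ survives'' is false, and the $v=\emptyset$ case is itself a nontrivial antipode-type cancellation identity. Moreover, the identity you offer as its ``genuine content,'' namely $(vw|\mathbf{q})=\sum_r(v|\mathbf{r})(w|\mathbf{q}-\mathbf{r})$, is the $u=\emptyset$ specialization, which is \emph{not} the base case of an induction on $|v|$ and cannot play that role. If you attempt the $v=\emptyset$ case by a further induction on $|u|$, you will find it does not close on itself: splitting off a letter produces the sum $\sum_{(u),r}c_r(u_{(1)}x|\mathbf{p}+\mathbf{r})(u_{(2)}w|\mathbf{q}-\mathbf{r})$, which carries no sign $(-1)^{|u_{(2)}|}$ and so requires an auxiliary Leibniz-type identity --- real additional work that the proposal does not contain.

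By contrast, your inductive step is correct, and in fact easier than you feared: no Pascal or Vandermonde collapsing is needed. Writing $\mathcal{R}$ for the right-hand side, the recursion $\mathcal{R}(u,v'x,w)=(-1)^{|v'|}\mathcal{R}(ux,v',w)+\mathcal{R}(u,v',xw)$ follows directly from $\Delta(ux)=\Delta(u)(x\otimes 1+1\otimes x)$ and skew-symmetry of biproducts in the letters: in the combination $(-1)^{|v'|}\mathcal{R}(ux,v',w)+\mathcal{R}(u,v',xw)$ the terms coming from the slices $(u_{(1)},u_{(2)}x)$ of $ux$ cancel exactly against $\mathcal{R}(u,v',xw)$, and what remains equals $\mathcal{R}(u,v'x,w)$ after moving $x$ past $u_{(1)}$; the coefficients $c_r$ pass through untouched. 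For calibration: the paper itself offers no proof of this statement at all --- it presents the displayed equality as the letterplace transcription of the Grosshans--Rota--Stein supersymmetric straightening law and cites \cite{GRS}. So your closing ``alternative'' is precisely what the paper does; if you want a self-contained argument, the missing (and only missing) ingredient is an honest proof of the $v=\emptyset$ identity.
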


\begin{corollary}
Let $u, v$ be words on $\emph{L};$ let $1 \leq i \neq j \leq m,$
and $p, q$ nonnegative integers. Then
\begin{multline*}
\sum_{(v)}
(u v_{(1)}|i^{(p)})
(v_{(2)}|j^{(q)})
=
(-1)^{|u||v|}
\sum_{(u), r}
(-1)^{|u_{(2)}|}
(v u_{(1)}|i^{(p)} j^{(r)})
(u_{(2)}|j^{(q - r)}).
\end{multline*}
The sum in the left-hand side is extended to all the slices of the
word $v;$
the sum in the right-hand side is extended to all the slices of
the word $u,$ and to all the nonnegative integers $r$ such that
$0\leq r \leq q.$
\end{corollary}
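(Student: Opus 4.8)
The plan is to deduce this identity as a one-line specialization of the Straightening Law of Theorem 5. That theorem carries full place-multidegrees $p_1, \dots, p_m$ and $q_1, \dots, q_m$ together with an auxiliary trailing word $w$; the corollary is exactly the case where the content is concentrated on two places and $w$ is trivial. First I would substitute $w = \emptyword$, place the entire first multidegree on $i$ by taking $p_i = p$ and $p_k = 0$ for $k \neq i$, and place the entire second multidegree on $j$ by taking $q_j = q$ and $q_k = 0$ for $k \neq j$. Because the biproducts are symmetric in the places, there is no loss in assuming $i < j$; the case $i > j$ is recovered by reordering the two active place-exponents.

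Next I would read off both sides under this substitution. On the left, $(u v_{(1)}|1^{(p_1)} \cdots m^{(p_m)})$ degenerates to $(u v_{(1)}|i^{(p)})$, and, since $w$ is empty, $(v_{(2)} w|1^{(q_1)} \cdots m^{(q_m)})$ degenerates to $(v_{(2)}|j^{(q)})$, giving precisely the left-hand side of the corollary. On the right, the multi-index $r = (r_1, \dots, r_m)$ ranges over $0 \leq r_k \leq q_k$; as $q_k = 0$ for all $k \neq j$, every component except $r_j$ is forced to vanish, so the sum collapses to a single sum over the integer $r = r_j$ with $0 \leq r \leq q$. The two surviving biproducts then read $(v u_{(1)}|i^{(p)} j^{(r)})$ and $(u_{(2)} w|1^{(q_1 - r_1)} \cdots m^{(q_m - r_m)}) = (u_{(2)}|j^{(q-r)})$.

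The one computation worth checking is the coefficient $c_r = \prod_k \binom{p_k + r_k}{r_k}$. For $k = i$ we have $r_i = 0$, so the factor is $\binom{p}{0} = 1$; for $k = j$ we have $p_j = 0$, so the factor is $\binom{r}{r} = 1$; and for every other $k$ the factor is $\binom{0}{0} = 1$. Hence $c_r = 1$ identically, and the coefficient simply drops out of the specialized identity. The signs transfer without change: $(-1)^{|u||v|}$ is untouched, and $(-1)^{|u_{(2)}|}$ is untouched because putting $w = \emptyword$ affects neither the lengths $|u|, |v|$ nor the slicing of $u$. I do not expect a real obstacle; the whole argument is bookkeeping of the degeneration of multidegrees, the collapse of $r$ to a single integer, and the verification that $c_r \equiv 1$, so the main (and mild) difficulty is simply keeping the indices and signs straight.
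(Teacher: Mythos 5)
Your proposal is correct and is exactly the argument the paper intends: the corollary is stated as an immediate specialization of the Straightening Law (Theorem 5), obtained by taking $w$ empty, concentrating the first multidegree at place $i$ and the second at place $j$, whereupon the multi-index $r$ collapses to a single integer and $c_r = \prod_k \binom{p_k+r_k}{r_k} \equiv 1$, just as you verified. The paper leaves this routine degeneration implicit, and your write-up fills it in faithfully, including the harmless use of place-symmetry to cover the case $i > j$.
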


\section{Letterplace encodings}

\subsection{The letterplace encoding of the algebra $Skew[\emph{L}]^{\otimes m}$}

Given a set $\emph{L},$ consider
the free associative skew-symmetric (unitary) $\mathbb{Z}-$algebra $Skew[\emph{L}]$
generated by the set $\emph{L}.$
On each $\mathbb{Z}-$algebra tensor power $Skew[\emph{L}]^{\otimes m},$ $m \in \mathbb{Z}^+,$
we may consider the ``formal" geometric products $\diamond^{(h)}_{ji},$
defined and denoted in the same way as in Subsection 3.2.

\noindent
There is a ${\mathbb{Z}}$-algebra  isomorphism
\begin{align*}
\Phi: Skew[\emph{L}|\underline{m}] &\rightarrow Skew[\emph{L}]^{\otimes m},
\\
(x|i) &\mapsto 1 \otimes \cdots \otimes 1 \otimes x \otimes 1
\otimes \cdots \otimes 1,
\end{align*}
(where $x \in \emph{L}$ is in the $i-$th fold in the tensor product).

\begin{proposition}
The geometric products on $Skew[\emph{L}]^{\otimes m}$
correspond, via the isomorphism $\Phi,$ to
the divided powers of place-polarizations on $Skew[\emph{L}|\underline{m}]:$
$$
\diamond_{ji}^{(h)}
=
\Phi \circ \emph{D}_{ji}^{(h)} \circ \Phi^{-1},
\qquad
i \neq j,
\quad
h \in \mathbb{Z}^+.
$$
\end{proposition}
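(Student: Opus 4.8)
The plan is to reduce the identity to its first-order instance $h=1$ and then to recover all higher $h$ by a divided-power argument. Throughout I abbreviate $\Phi(x|k) = 1 \otimes \cdots \otimes x \otimes \cdots \otimes 1$ (with $x$ in the $k$-th factor), and I regard $\diamond_{ji}^{(h)}$ as the formal geometric product on $Skew[\emph{L}]^{\otimes m}.$ Since all operators in sight are $\mathbb{Z}$-linear, it suffices to identify them on a $\mathbb{Z}$-linear spanning set.

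First I would show that the first-order product $\diamond_{ji}^{(1)}$ is an even derivation of the $\mathbb{Z}_2$-graded algebra $Skew[\emph{L}]^{\otimes m}.$ The point is that $\diamond_{ji}^{(1)}$ preserves total degree (a degree-$1$ piece is merely transported from place $i$ to place $j$), so it is an even operator, and the prefactor $(-1)^{a_{i+1} + \cdots + a_{j-1}}$ appearing in its definition is precisely the Koszul sign required to carry that degree-$1$ piece across the intermediate factors. Combining this with the fact that the coproduct $\Delta$ on $Skew[\emph{L}]$ is an algebra map, one checks the plain Leibniz rule
\[
\diamond_{ji}^{(1)}(uv) = \diamond_{ji}^{(1)}(u)\,v + u\,\diamond_{ji}^{(1)}(v),
\qquad u, v \in Skew[\emph{L}]^{\otimes m}.
\]

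Next, because $\Phi$ is an algebra isomorphism, the conjugate $\Phi^{-1} \diamond_{ji}^{(1)} \Phi$ is again a derivation, this time of $Skew[\emph{L}|\underline{m}],$ and a derivation is determined by its values on the generators $(x|k).$ A one-line computation gives $\diamond_{ji}^{(1)} \Phi(x|k) = \delta_{ik}\, \Phi(x|j)$: slicing off a degree-$1$ piece at place $i$ returns zero unless the $i$-th factor actually carries the letter (i.e. $k=i$), and in that case the intermediate factors are units, so no sign occurs. Hence $\Phi^{-1}\diamond_{ji}^{(1)}\Phi\,(x|k) = \delta_{ik}(x|j) = D_{ji}(x|k),$ and since both sides are derivations agreeing on generators we conclude $\Phi^{-1}\diamond_{ji}^{(1)}\Phi = D_{ji} = D_{ji}^{(1)}.$

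Finally I would establish the divided-power relation $(\diamond_{ji}^{(1)})^{h} = h!\,\diamond_{ji}^{(h)}$ on $Skew[\emph{L}]^{\otimes m}.$ Iterating the degree-$1$ slice $h$ times corresponds, by coassociativity, to the slice $(a_i - h, 1, \ldots, 1)$ at place $i$; reassembling the $h$ transported single factors into a product at place $j$ amounts to applying $\Delta_{(1^{h})}$ to the degree-$h$ piece, whose $h!$ signed summands each reconstitute the same product, yielding the scalar $h!,$ while the $h$ intermediate Koszul signs accumulate exactly to the prefactor $(-1)^{h(a_{i+1}+\cdots+a_{j-1})}$ of $\diamond_{ji}^{(h)}.$ Since $D_{ji}^{(h)} = D_{ji}^{h}/h!$ by definition, conjugation then gives
\[
\Phi^{-1}\diamond_{ji}^{(h)}\Phi
= \tfrac{1}{h!}\,\bigl(\Phi^{-1}\diamond_{ji}^{(1)}\Phi\bigr)^{h}
= \tfrac{1}{h!}\,D_{ji}^{h}
= D_{ji}^{(h)},
\]
which is the assertion. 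The main obstacle is the sign accounting in the first and last steps — verifying that the intermediate-factor signs in the definition of $\diamond_{ji}^{(h)}$ are exactly the graded signs making $\diamond_{ji}^{(1)}$ a derivation and making the $h$-fold iteration accumulate correctly; everything else is formal. Alternatively one could verify the statement directly on the biproducts $(w|1^{(q_1)}\cdots m^{(q_m)}),$ whose image under $\Phi$ is the coproduct slice $\Delta_{(q_1,\ldots,q_m)}(w),$ by comparing the action of $\diamond_{ji}^{(h)}$ with Corollary 3; but this route is heavier on sign bookkeeping.
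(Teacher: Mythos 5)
Your proposal is correct, but it takes a genuinely different route from the paper's. The paper proves the identity by a single direct computation: it evaluates both $\diamond_{ji}^{(h)}\circ\Phi$ and $\Phi\circ D_{ji}^{(h)}$ on the spanning monomials $(A_1|1^{(q_1)})\cdots(A_m|m^{(q_m)})$, using the concentration of the divided power on the place-$i$ factor, the formula $D_{ji}^{(h)}(A_i|i^{(q_i)})=(A_i|i^{(q_i-h)}j^{(h)})$ from the Corollary in Subsection 6.2, the Laplace expansion of biproducts, and the anticommutation of biproducts to collect the transported factor at place $j$ --- in other words, exactly the alternative you mention and set aside in your final sentence. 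You instead reduce to $h=1$: you show that $\diamond_{ji}^{(1)}$ obeys the plain Leibniz rule, so its $\Phi$-conjugate is a derivation of $Skew[L|\underline{m}]$, which is then identified with $D_{ji}$ by the trivial check on the generators $(x|k)$; general $h$ is recovered from the combinatorial identity $(\diamond_{ji}^{(1)})^h=h!\,\diamond_{ji}^{(h)}$. What your route buys: the generator check replaces the paper's global computation, and you obtain, independently of the encoding, both the derivation property of simple geometric products and the divided-power relation --- facts which the paper states (Proposition 5 and the closing remark of Subsection 7.1) only as \emph{consequences} of the encoding; your ordering reverses the paper's and is not circular, precisely because your two lemmas use only the definition of $\diamond_{ji}^{(1)}$ and the bialgebra structure of $Skew[L]$. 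What it costs: the Koszul-sign bookkeeping is not eliminated, only relocated into those two lemmas (the Leibniz rule, and the fact that the $h$-fold iteration produces the slice $(a_i-h,1,\ldots,1)$ together with the factor $h!$ via $\mathrm{mult}\circ\Delta_{(1^h)}=h!\,\mathrm{id}$ on step-$h$ extensors, the intermediate signs accumulating to $(-1)^{h(a_{i+1}+\cdots+a_{j-1})}$); these should be written out with the same care the paper devotes to its one computation.

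One point you should make explicit: all algebras here are over $\mathbb{Z}$, so ``$\tfrac{1}{h!}$'' is not available as an operator manipulation. The clean formulation of your last step is that $(\diamond_{ji}^{(1)})^h=h!\,\diamond_{ji}^{(h)}$ and $D_{ji}^h=h!\,D_{ji}^{(h)}$ are identities between integrally defined operators, whence $h!\bigl(\Phi^{-1}\diamond_{ji}^{(h)}\Phi\bigr)=h!\,D_{ji}^{(h)}$, and the factor $h!$ cancels because $Skew[L|\underline{m}]$ is a free, hence torsion-free, $\mathbb{Z}$-module. This is a one-line repair, but as written your chain of equalities silently divides by $h!$ in a ring where that is not permitted.
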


\begin{proof}
We consider the case of raising geometric products, i.e. the case $i < j.$
We will prove that the operators
$\diamond_{ji}^{(h)} \circ \Phi$ and $\Phi \circ \emph{D}_{ji}^{(h)}$
have the same action on the monomials of the type
$$
(A_1|1^{(q_1)}) \cdots (A_i|i^{(q_i)}) \cdots (A_j|j^{(q_j)}) \cdots (A_m|m^{(q_m)}).
$$
On the one hand, we have
\begin{multline*}
\diamond^{(h)}_{ji}
\left(
\Phi
\left(
\cdots (A_i|i^{(q_i)}) \cdots (A_j|j^{(q_j)}) \cdots
\right)
\right)
=
\\
\diamond^{(h)}_{ji}
\left(
\cdots \otimes A_i \otimes \cdots \otimes A_j \otimes \cdots
\right)
=
\\
=
(-1)^{h ( q_{i+1} + \cdots + q_{j-1})}
\sum_{(A_i)_{(q_i-h, h)}} \cdots \otimes
(A_i)_{(1)} \otimes \cdots \otimes (A_i)_{(2)} A_j
\otimes \cdots.
\end{multline*}
On the other hand, we have
\begin{multline*}
\Phi
\left(
\emph{D}_{ji}^{(h)}
\left(
\cdots
(A_i|i^{(q_i)}) \cdots (A_j|j^{(q_j)}) \cdots
\right)
\right)
=
\\
\Phi
\left(
\cdots
(A_i|i^{(q_i-h)}j^{(h)}) \cdots (A_j|j^{(q_j)})
\cdots
\right)
=
\\
\Phi \left( \sum_{(A_i)_{(q_i-h, h)}} \cdots
((A_i)_{(1)}|i^{(q_i-h)}) ((A_i)_{(2)}|j^{(h)}) \cdots (A_j|j^{(q_j)})
\cdots \right)
=
\\
(-1)^{h ( q_{i+1} + \cdots + q_{j-1})}
\Phi \left( \sum_{(A_i)} \cdots
((A_i)_{(1)}|i^{(q_i-h)}) \cdots ((A_i)_{(2)}A_j|j^{(q_j+h)})
\cdots  \right)=
\\
= (-1)^{h ( q_{i+1} + \cdots + q_{j-1})}
\sum_{(A_i)} \cdots \otimes
(A_i)_{(1)} \otimes \cdots \otimes (A_i)_{(2)} A_j
\otimes \cdots.
\end{multline*}
\end{proof}

\noindent
In particular, we have that
each geometric product $\diamond_{ji}^{(1)}$ correspond to a polarization $\emph{D}_{ij},$ for $i\neq j;$
ths leads us to define
the $\diamond_{ji}^{(1)}$ as the correspondent of the polarization $\emph{D}_{ii}.$

We write each geometric product $\diamond^{(1)}_{ji}$ briefly as $\diamond_{ji},$ and
call it a {\it simple} geometric product, for $1\leq i, j \leq m.$
The encoding of tensor powers of free skew-symmetric algebras and geometric products with
skew-symmetric letterplace algebras and polarization operators leads
to the following characterization and properties of simple geometric products.

\begin{proposition}
The simple geometric products on
the algebra $Skew[\emph{L}]^{\otimes m}$
enjoy the following properties:
\begin{itemize}
\item
Each $\diamond_{ji}$ is
a derivation on the algebra $Skew[\emph{L}]^{\otimes m};$
it is the unique derivation such that
\begin{multline*}
\diamond_{ji}
( \cdots \otimes 1 \otimes a \otimes 1 \cdots \otimes 1 \otimes 1 \otimes 1 \otimes \cdots )
=
\\
\delta_{ih}
\cdots \otimes 1 \otimes 1 \otimes 1 \cdots \otimes 1 \otimes a \otimes 1 \otimes \cdots ,
\end{multline*}
for any $a \in \emph{L}.$
In the left hand side $a$ occurs in the $h-$th fold,
in the right hand side $a$ occurs in the $j-$th fold.

\item
The space spanned by the $\diamond_{ij}$ for $1 \leq i,j \leq m$ is
a Lie subalgebra of the Lie algebra of derivations on $Skew[\emph{L}]^{\otimes m};$
furthermore, simple geometric products satisfy the commutation relations
$$
\diamond_{ij} \diamond_{hk} - \diamond_{hk} \diamond_{ij} =
\delta_{jh} \diamond_{ik} - \delta_{ki} \diamond_{hj}.
$$
Thus, the mapping that sends
each $m\times m$ elementary matrix with $1$ in the $(i,j)-$th entry to
the simple geometric product $\diamond_{ij}$
induces a Lie representation of
the general linear Lie algebra $gl(m, \mathbb{Z})$ in
the Lie algebra of derivations of
$Skew[\emph{L}]^{\otimes m}.$
\end{itemize}
\end{proposition}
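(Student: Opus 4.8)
The plan is to reduce everything to the preceding Proposition, which identifies each simple geometric product with the conjugate of a place polarization, $\diamond_{ji} = \Phi \circ \emph{D}_{ji} \circ \Phi^{-1}$, along the algebra isomorphism $\Phi : Skew[\emph{L}|\underline{m}] \to Skew[\emph{L}]^{\otimes m}$ (for the diagonal indices $i=j$ this identity is taken as the definition of $\diamond_{ii}$). Once this identification is in hand, both bullets become transport-of-structure statements along $\Phi$.

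For the first bullet I would argue as follows. Each $\emph{D}_{ji}$ is, by its very definition in Subsection 6.1, a derivation of $Skew[\emph{L}|\underline{m}]$; moreover it is an even (parity-preserving) operator in the $\mathbb{Z}_2$-grading, so no Koszul signs intervene. Since $\Phi$ is an isomorphism of $\mathbb{Z}_2$-graded $\mathbb{Z}$-algebras, a direct computation shows that the conjugate $\Phi \circ \emph{D}_{ji} \circ \Phi^{-1}$ satisfies the Leibniz rule: expanding $(\Phi \circ \emph{D}_{ji} \circ \Phi^{-1})(xy)$ and using first that $\Phi^{-1}$ is multiplicative, then that $\emph{D}_{ji}$ is a derivation, then that $\Phi$ is multiplicative, returns exactly the two Leibniz terms. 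Hence each $\diamond_{ji}$ is a derivation. For uniqueness I would invoke the standard fact that a derivation is determined by its values on a set of algebra generators; it then suffices to check that $\diamond_{ji}$ takes the prescribed values on the generators $1 \otimes \cdots \otimes a \otimes \cdots \otimes 1$. This is the one small computation I would carry out explicitly: $\Phi^{-1}$ sends the generator carrying $a$ in the $h$-th fold to $(a|h)$, applying $\emph{D}_{ji}$ gives $\delta_{ih}(a|j)$, and applying $\Phi$ returns $\delta_{ih}$ times the generator carrying $a$ in the $j$-th fold, matching the statement.

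For the second bullet the key observation is that conjugation by an algebra isomorphism is a homomorphism of Lie algebras of operators, so that $[\diamond_{ab}, \diamond_{cd}] = \Phi \circ [\emph{D}_{ab}, \emph{D}_{cd}] \circ \Phi^{-1}$. I would then import the place-polarization commutation relations of Subsection 6.1, rewritten in the generic form $[\emph{D}_{ab}, \emph{D}_{cd}] = \delta_{bc}\emph{D}_{ad} - \delta_{da}\emph{D}_{cb}$, and substitute $a=i, b=j, c=h, d=k$ to get $[\emph{D}_{ij}, \emph{D}_{hk}] = \delta_{jh}\emph{D}_{ik} - \delta_{ki}\emph{D}_{hj}$. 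Conjugating by $\Phi$ term by term yields precisely the asserted relation $\diamond_{ij}\diamond_{hk} - \diamond_{hk}\diamond_{ij} = \delta_{jh}\diamond_{ik} - \delta_{ki}\diamond_{hj}$, and closure of the span under brackets, hence the Lie-subalgebra claim, is immediate. Finally, since the elementary matrices satisfy $[E_{ij}, E_{hk}] = \delta_{jh}E_{ik} - \delta_{ki}E_{hj}$, the very same relations as the $\diamond_{ij}$, the linear map $E_{ij} \mapsto \diamond_{ij}$ respects brackets on a basis and therefore extends to a Lie-algebra representation of $gl(m, \mathbb{Z})$.

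I expect the only genuinely error-prone step to be the index bookkeeping in the commutation relations, namely making sure the substitution into the place-polarization identity lands exactly on the claimed right-hand side with the correct signs, whereas the derivation and homomorphism properties are formal consequences of $\Phi$ being an algebra isomorphism conjugating even operators.
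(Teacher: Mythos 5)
Your proposal is correct and follows essentially the same route as the paper: the paper states this proposition as an immediate consequence of the letterplace encoding (the preceding proposition identifying $\diamond_{ji}$ with $\Phi \circ \emph{D}_{ji} \circ \Phi^{-1}$) together with the facts from Subsection 6.1 that place polarizations are derivations satisfying the $gl(m,\mathbb{Z})$ commutation relations, offering no further proof. Your write-up simply makes explicit the transport-of-structure argument (conjugation by $\Phi$ preserves the Leibniz rule and Lie brackets) that the paper leaves implicit, and your index bookkeeping in the commutation relations is accurate.
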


\noindent
Notice that the geometric products are divided powers of simple geometric products:
$$
\diamond^{(h)}_{ij}
=
\frac {\diamond^h_{ij}} {h!},
\qquad
i \neq j.
$$

\subsection{The Letterplace encoding of $\Lambda(V)^{\otimes m}.$}

Let $\mathbb{K}$ be a field, and $V$ a finite-dimensional $\mathbb{K}-$vector space.
For each tensor power
$
\Lambda(V)^{\otimes m}
$
of the exterior algebra of $V,$
we have the isomorhisms
$$
\Lambda(V)^{\otimes m}
\cong
\frac {\mathbb{K}\otimes Skew[V]^{\otimes m}}   {I_m(V)}
\cong
\frac {\mathbb{K}\otimes Skew[V|\underline{m}]}   {J_m(V)}
$$
where:
$I_m(V)$ is the bilateral ideal of the algebra
$
\mathbb{K}\otimes Skew[V]^{\otimes m}
$
generated by the elements of the form
$
1\otimes \cdots \otimes v \otimes \cdots \otimes 1
-
\lambda \cdot 1\otimes \cdots \otimes v_1 \otimes \cdots \otimes 1
-
 \mu \cdot 1\otimes \cdots \otimes v_2 \otimes \cdots \otimes 1
$
for every $v = \lambda v_1 + \mu v_2 \in V,$ and every fold of the tensor;
$J_m(V)$ is the bilateral ideal of the algebra
$
\mathbb{K}\otimes Skew[V|\underline{m}]
$
generated by the elements of the form
$
(v|i)
-
\lambda
(v_1|i)
-
\mu
(v_2|i)
$
for every $v = \lambda v_1 + \mu v_2 \in V,$ and every $1\leq i \leq m.$
Thus we have an isomorphism
$$
\Phi_V:
\frac {\mathbb{K}\otimes Skew[V|\underline{m}]}   {J_m(V)}
\rightarrow
\Lambda(V)^{\otimes m}.
$$
Now,
each place polarization on the algebra
$
\mathbb{K}\otimes Skew[V|\underline{m}]
$
leaves invariant the ideal $J_m(V),$
thus it induces a place polarization on the quotient algebra
$
(\mathbb{K}\otimes Skew[V|\underline{m}])/J_m(V).
$


\noindent
From the results of   subsection 7.1 we get, in particular:
\begin{itemize}
\item
The geometric products on $\Lambda(V)^{\otimes m}$
correspond, via the isomorphism $\Phi_V,$ to
the divided power place polarizations on
$
(\mathbb{K}\otimes Skew[V|\underline{m}])/J_m(V):
$
$$
\diamond_{ji}^{(h)} = \Phi_V \circ \emph{D}_{ji}^{(h)} \circ \left(\Phi_V\right)^{-1};
$$
\item
the mapping that sends
each $m\times m$ elementary matrix with $1$ in the $(i,j)-$th entry to
the simple geometric product $\diamond_{ij}$
induces a Lie representation of
the general linear Lie algebra $gl(m, \mathbb{K})$ in
the Lie algebra of derivations of
$\Lambda(V)^{\otimes m}.$
\end{itemize}

\subsection{The Letterplace encoding of $W^m(M).$}

Let $M = M(S)$ be a matroid of rank $n$ over a set $S.$
Consider the $m-$th Whitney algebra of $M,$
$$
W^m(M) = Skew(S)^{\otimes m}/I(M),
$$
where $I(M)$ is the bilateral ideal of $Skew(S)^{\otimes m}$ generated by
the slices of dependent words on $S.$

\noindent
Under the isomorphism
$$
\Phi:
Skew[S|\underline{m}] \rightarrow Skew(S)^{\otimes m},
$$
the ideal $I(M)$ in $Skew(S)^{\otimes m}$ corresponds to
the ideal $J(M)$ in $Skew[S|\underline{m}]$ generated by the biproducts
$
(w|1^{(p_1)} \cdots m^{(p_m)}),
$
for every word $w = x_1 x_2\cdots x_p$ whose
corresponding set $\{x_1, x_2, \ldots, x_p\}\subseteq S$ is dependent in $M.$
Indeed,
\begin{align*}
\Phi (w|1^{(p_1)} \cdots m^{(p_m)})
&=
\Phi
\sum_{(w)_{(p_1, \ldots, p_m)}} (w_{(1)}|1^{(p_1)}) \cdots (w_{(m)}|m^{(p_m)})
\\
&=
\sum_{(w)_{(p_1, \ldots, p_m)}} w_{(1)}\otimes \cdots \otimes w_{(p)}.
\end{align*}
Thus we have an isomorphism
$$
\Phi_M: Skew[S|\underline{m}]/J(M) \rightarrow Skew(S)^{\otimes m}/I(M) = W^m(M).
$$
We refer to the algebra
$$
\mathcal{W}^m(M) = Skew[S|\underline{m}]/J(M)
$$
as the {\it letterplace encoding} of $W^m(M).$

Now, under the isomorphism
$$
\Phi:
Skew[S|\underline{m}] \rightarrow Skew(S)^{\otimes m},
$$
the geometric products on $Skew(S)^{\otimes m}$ correspond to
the divided power polarizations on $Skew[S|\underline{m}].$
The divided power polarizations on $Skew[S|\underline{m}]$ leave invariant the ideal $J(M),$
so they are well-defined on the quotient algebra $\mathcal{W}^m(M).$
Thus,
the geometric products on $Skew(S)^{\otimes m}$ leave invariant the ideal $I(M),$
so they are well-defined on the quotient algebra,
the Whitney algebra $W^m(M).$

\noindent
From the results of subsection 7.1 we get, in particular:
\begin{itemize}
\item
the mapping that sends
each $m\times m$ elementary matrix with $1$ in the $(i,j)-$th entry to
the simple geometric product $\diamond_{ij}$
induces a Lie representation of
the general linear Lie algebra $gl(m, \mathbb{Z})$ in
the Lie algebra of derivations of
$W^m(M).$
\end{itemize}

\section{The Exchange relations}

This subsection is devoted to a discussion of what Crapo and Schmitt
called {\it the fundamental exchange relations in the Whitney algebra}
(\cite{crasch}, p. 218 and Theorem 7.4). These relations, in particular,
generalize the two definitions of the meet in the CG-algebra of a Peano space.


The original approach to Whitney algebras makes use of
the heavy categorical machinery of ``lax Hopf algebras".
The proof of the exchange relations is based  on the ``Zipper Lemma" (\cite{crasch}, Theorem 5.7, p. 240),
which is an identity satisfied by the homogeneous components of coproducts
in a free skew-symmetric algebra.



Thanks to the letterplace encoding of the Whitney algebra,
the exchange relations turn out to be an almost direct consequence of the
Straightening Laws of Grosshans, Rota and Stein
\cite{GRS}.


\begin{proposition}
Let $M(S)$ be a matroid on the set $S.$
Let $u$ and $v$ be words of legth $p$ and $q$ on $S,$
associated to independent subsets $A$ and $B,$ respectively.
Let $k$ be the nonnegative integer such that
$
\rho(A \cup B) + k = p + q.
$
In the algebra $\mathcal{W}^2(M)$ we have
\begin{multline*}
\sum_{(v)_{q-kk,k}}
(uv_{(1)}|1^{(p+q-k)}) (v_{(2)}|2^{(k)})
=
\\
(-1)^{pq + k}
\sum_{(u)_{p-k,k}}
(vu_{(1)}|1^{(p+q-k)}) (u_{(2)}|2^{(k)}).
\end{multline*}
\end{proposition}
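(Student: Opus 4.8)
The plan is to obtain this identity as an almost immediate specialization of the two-place Corollary to the Straightening Law of Grosshans, Rota and Stein, invoking the matroid structure of $M$ only at the very end, to discard the superfluous summands. I would work throughout in $Skew[S|\underline{2}]$ and pass to the quotient $\mathcal{W}^2(M) = Skew[S|\underline{2}]/J(M)$ at the final step.

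First I would recognize that the left-hand side is already an instance of the left-hand side of that Corollary, specialized to $i = 1$, $j = 2$, with exponent $p+q-k$ on place $1$ and $k$ on place $2$ (so that the Corollary's word lengths are $|u| = p$ and $|v| = q$). Since the biproduct $(uv_{(1)}|1^{(p+q-k)})$ is nonzero only when $|uv_{(1)}| = p+q-k$, that is when $|v_{(1)}| = q-k$, and $(v_{(2)}|2^{(k)})$ is nonzero only when $|v_{(2)}| = k$, the Corollary's full sum over slices of $v$ collapses to the single slice $(v)_{q-k,k}$ displayed in the statement; hence both left-hand sides agree. Applying the Corollary then rewrites this as
\begin{equation*}
(-1)^{pq} \sum_{r=0}^{k} (-1)^{k-r} \sum_{(u)_{p-k+r,\,k-r}} (v u_{(1)}|1^{(p+q-k)} 2^{(r)})\,(u_{(2)}|2^{(k-r)}),
\end{equation*}
where, for each index $r$, the Laplace expansion forces $|u_{(1)}| = p-k+r$ and $|u_{(2)}| = k-r$ (place $1$ must receive $p+q-k$ letters; place $2$ must receive $r$ from $vu_{(1)}$ and $k-r$ from $u_{(2)}$), so that the $u$-slice attached to $r$ is $(p-k+r,\,k-r)$ and the sign $(-1)^{|u_{(2)}|}$ equals $(-1)^{k-r}$.

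The key step, and the only place where the matroid enters, is to show that every summand with $r > 0$ vanishes in $\mathcal{W}^2(M)$. For such an $r$, the word $vu_{(1)}$ appearing in $(v u_{(1)}|1^{(p+q-k)} 2^{(r)})$ has length $q + |u_{(1)}| = p+q-k+r$. If it has a repeated letter it is already zero in the skew-symmetric algebra; otherwise its underlying set is contained in $A \cup B$, has cardinality $p+q-k+r$, and therefore has more elements than $\rho(A \cup B) = p+q-k$, so it is dependent in $M$. By the description of $J(M)$ in Subsection 7.3, the biproduct $(vu_{(1)}|\cdots)$ is then a generator of $J(M)$ and vanishes in the quotient. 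I expect this rank count to be the substantive point of the whole argument: it is precisely the hypothesis $\rho(A \cup B) + k = p+q$ that identifies $p+q-k$ with the rank of $A \cup B$, and so forces dependence the moment one extra letter ($r \geq 1$) is loaded onto place $1$.

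Finally, only the $r = 0$ term survives. There the $u$-slice is $(p-k,\,k)$, the factor $(v u_{(1)}|1^{(p+q-k)} 2^{(0)})$ reduces to $(v u_{(1)}|1^{(p+q-k)})$ by the divided-power convention for the empty content on place $2$, and the surviving sign is $(-1)^{pq}(-1)^{k} = (-1)^{pq+k}$. This is exactly the right-hand side of the Proposition, which completes the plan.
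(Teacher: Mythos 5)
Your proposal is correct and follows essentially the same route as the paper: both invoke the two-place Corollary of the Grosshans--Rota--Stein Straightening Law and then use the rank hypothesis $\rho(A\cup B)=p+q-k$ to show that every biproduct $(vu_{(1)}|1^{(p+q-k)}2^{(r)})$ with $r>0$ involves a word that is repeated-letter or dependent, hence lies in $J(M)$, leaving only the single surviving term with sign $(-1)^{pq+k}$. The only differences are cosmetic: your summation index $r$ is the paper's $k-t$, and you spell out (which the paper leaves implicit) why the Corollary's full sum over slices of $v$ collapses to the single slice type $(q-k,k)$.
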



\begin{proof}
From Corollary 2 (Subsect. 6.3), we have
\begin{multline*}
\sum_{(v)_{q-kk,k}}
(uv_{(1)}|1^{(p+q-k)}) (v_{(2)}|2^{(k)})
=
\\
(-1)^{pq}
\sum_{t=0}^k
(-1)^{t}
\sum_{(u)_{p-t,t}}
(vu_{(1)}|1^{(p+q-k)}2^{(k-t)}) (u_{(2)}|2^{(t)}).
\end{multline*}
Now, for each $t,$
to each coproduct slice $(u)_{p-t,t}$
corresponds a word $vu_{(1)}$ of length $q+p-t;$
from the assumption
$
\rho(A \cup B) = p + q - k
$
it follows that for $t < k$
the word $vu_{(1)}$ corresponds to a dependent subset of $S,$
or contains a repeated symbol;
in any case the biproduct
$
(vu_{(1)}|1^{(p+q-k)}2^{(k-t)})
$
vanishes for $t < k.$
Thus the right hand side of the above equality simplifies to
$$
(-1)^{pq+k}
\sum_{(u)_{p-k,k}}
(vu_{(1)}|1^{(p+q-k)}) (u_{(2)}|2^{(k)}).
$$
\end{proof}

By translating the previous Proposition in the Whitney algebra $W^2(M),$
and making a trivial sign computation, we get

\begin{theorem}
(The exchange relations of Crapo and Schmitt \cite{crasch}, Thm.7.4)

Under the same assumptions of the preceding Proposition,
in the algebra $W^2(M)$ we have:
$$
\sum_{(u)_{q-k, k}}
uv_{(2)}\circ v_{(1)}
=
\sum_{(v)_{p-k, k}}
u_{(1)}v \circ u_{(2)}.
$$
\end{theorem}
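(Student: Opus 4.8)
The plan is to read the preceding Proposition, which lives in the letterplace encoding $\mathcal{W}^2(M) = Skew[S|\underline{2}]/J(M)$, as an identity inside the Whitney algebra $W^2(M)$ itself, by transporting it along the isomorphism $\Phi_M : \mathcal{W}^2(M) \rightarrow W^2(M)$ of Subsection 7.3. The only genuinely new ingredient is the behaviour of $\Phi_M$ on the biproducts occurring in the Proposition. Each such biproduct is pure in a single place: the word length equals the place exponent, so by the divided-power conventions $(uv_{(1)}|1^{(p+q-k)})$ and $(v_{(2)}|2^{(k)})$ are ordinary monomials all of whose letterplace variables sit in place $1$, respectively place $2$. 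Hence $\Phi$ sends them to the pure tensors $uv_{(1)}\otimes 1$ and $1\otimes v_{(2)}$, and the tensor-power product rule of Subsection 3.2 contributes no sign, since one slot of each factor has degree $0$. Thus $\Phi_M\bigl((uv_{(1)}|1^{(p+q-k)})(v_{(2)}|2^{(k)})\bigr) = uv_{(1)}\circ v_{(2)}$, and likewise for the right-hand side; note that all the matroid-theoretic content (the collapse of the lower slices forced by $\rho(A\cup B)=p+q-k$) has already been consumed in the proof of the Proposition via the Straightening Law, so no further vanishing argument is needed here.

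Carrying this out term by term converts the Proposition into the identity
$$
\sum_{(v)_{q-k,k}} uv_{(1)}\circ v_{(2)}
= (-1)^{pq+k}\sum_{(u)_{p-k,k}} vu_{(1)}\circ u_{(2)}
$$
in $W^2(M)$. From here I would reach the symmetric form stated in the Theorem by the announced ``trivial sign computation''. First I would move $v$ past $u_{(1)}$ on the right using skew-commutativity of $Skew(S)$, producing a factor $(-1)^{q(p-k)}$ and replacing $vu_{(1)}$ by $u_{(1)}v$. Next, wherever the target form glues the half of a slice complementary to the one appearing above, I would reconcile the two slice labellings by the graded cocommutativity of the unshuffle coproduct on the free skew-symmetric algebra, in the form $\sum_{(v)_{a,b}} v_{(2)}\otimes v_{(1)} = (-1)^{ab}\sum_{(v)_{b,a}} v_{(1)}\otimes v_{(2)}$, which exchanges the $(q-k,k)$-slice for the $(k,q-k)$-slice at the cost of a sign $(-1)^{(q-k)k}$ (and analogously for the $u$-slice when required).

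The only real work, and the step I expect to be the main obstacle, is verifying that the accumulated signs — the $(-1)^{pq+k}$ inherited from the Proposition, the skew-commutation sign $(-1)^{q(p-k)}$, and the cocommutativity sign(s) $(-1)^{(q-k)k}$ from relabelling the slices — collapse to exactly the sign displayed in the Theorem. This is purely a parity count modulo $2$ in the integers $p,q,k$, with no further algebraic content. I would organise it by fixing one orientation convention for coproduct slices throughout, writing every summand against the common factorisations $u=u_{(1)}u_{(2)}$ and $v=v_{(1)}v_{(2)}$, and matching the two sides summand-by-summand; because $\Phi_M$ is an isomorphism and the Proposition is already an equality, the reduction to the stated exchange relation is then complete.
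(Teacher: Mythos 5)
Your proposal is correct and follows essentially the same route as the paper: the paper's own proof consists precisely of translating the preceding Proposition into $W^2(M)$ through the letterplace isomorphism $\Phi_M$ (where the biproducts, being pure in a single place, go to the tensors you describe) and then absorbing the signs, which the paper calls a ``trivial sign computation.'' For the record, your sign bookkeeping does close: $(pq+k)+q(p-k)+k(q-k)\equiv k(k+1)\equiv 0 \pmod 2$, so the accumulated sign is $+1$, exactly as the Theorem requires.
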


\section{Alternative Laws}

One of the more traditional and significant themes (on the path traced by Grassmann) in the study
Grassmann-Clifford Geometric Calculus consists in the study of invariant identities
that describe geometrical statements and constructions
(see e.g. \cite{gra1} Appendix III p. 285ff. (1877), \cite{Ford}, \cite{Hawr}, \cite{Main}, \cite{li}).
In order to make effective this approach, one has to develop a systematic work of the the identities
that hold for meet and join of extensors in a CG-algebra; these identities were called in \cite{BBR}
the {\it alternative laws}.

In the following,
for any extensors $A, B$ of steps $a, b,$ we set
$$
A \dot{\wedge} B
:=
\sum_{(A)_{(a+b-n, n-b)}}
A_{(1)}[A_{(2)}B]
=
(-1)^{(a+b-n)(n-b)}
A \wedge B.
$$
The operation $\dot{\wedge}$ is associative up to a sign.
From now on,
each expression involving $\dot{\wedge}$ is thought
as nested from left to right.
Notice that
\begin{align*}
\diamond_{21}^{(n-b)} (A \otimes B)
&=
(A \dot{\wedge} B) \otimes E;
\\
\diamond_{12}^{(b)} (A \otimes B)
&=
(A \vee B) \otimes 1.
\end{align*}

\subsection{A permanental identity}

From the Lie commutation relations, one gets that the
simple geometric products with indexes $0, i_1, \ldots, i_r, j'_1,
\ldots, j'_r,$ where $0$ is distinct from each of the other
indexes and each of the $i's$ is distinct from each of the $j's,$
satisfy the identity
$$
\diamond_{j'_r 0} \cdots \diamond_{j'_1 0}
\diamond_{0 i_r} \cdots \diamond_{0 i_1}
=
\mathrm{per} \left(\diamond_{j'_s i_t}\right)
+ \sum_{s=1}^r \diamond_{\ast \ast} \cdots  \diamond_{j'_s 0}.  \qquad (\ddag)
$$
The operator in formula $(\ddag)$ is said to be a {\it permanental Capelli operator}.
The left-hand side is called  'virtual form' of the right hand
side; the right-hand side is called an expansion of the left-hand
side, and the terms of the sum are the ``Capelli queues" of the
expansion (see, e.g. \cite{abl}, \cite{reg}).

\noindent
The identities we will discuss in the next two subsections
turn out to be consequences of different Laplace expansions
of the permanental term
$
\mathrm{per} \left(\diamond_{j'_s i_t}\right)
$
in formula $(\ddag).$

\subsection{Two typical alternative laws in CG-algebras}

One of the simplest alternative laws is the following (see, e.g. \cite{BBR}, Corollary 7.2):

\begin{proposition}
In the Cayley-Grassmann algebra of a Peano space $(V, [\ ])$ we have:
$$
\left(a_1 \vee \cdots \vee a_r\right) \wedge B_{1'} \cdots \wedge B_{r'}
=
\sum_{\sigma \in S_r} (-1)^{|\sigma|}
\left(a_{\sigma(1)} \wedge B_{1'}\right) \cdots
\left(a_{\sigma(r)} \wedge B_{r'}\right)
$$
for any vectors $a_1, \ldots, a_r$ in $V = \Lambda^1(V)$ and any
covectors $B_{1'}, \ldots, B_{r'}$ in $\Lambda^{n-1}(V)
\cong V^*.$
\end{proposition}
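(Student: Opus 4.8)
The plan is to verify that both sides of the identity are scalars (elements of $\mathbb{K}$) and to identify each of them with one and the same determinant of brackets. For the right-hand side this is immediate: each factor $a_{\sigma(i)} \wedge B_{i'}$ is the meet of a step-$1$ extensor with a step-$(n-1)$ extensor, so $n-b = 1$ and $a+b-n = 0$, and the defining formula for the meet splits off the whole vector, giving the scalar $a_{\sigma(i)} \wedge B_{i'} = [a_{\sigma(i)}\, B_{i'}]$. Hence
$$
\sum_{\sigma \in S_r} (-1)^{|\sigma|} \left(a_{\sigma(1)} \wedge B_{1'}\right) \cdots \left(a_{\sigma(r)} \wedge B_{r'}\right) = \sum_{\sigma \in S_r} (-1)^{|\sigma|} \prod_{i=1}^r [a_{\sigma(i)}\, B_{i'}] = \det\left([a_j\, B_{i'}]\right),
$$
the determinant of the $r \times r$ matrix whose $(j,i)$ entry is the bracket $[a_j\, B_{i'}]$.

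First I would check that the left-hand side is a scalar as well: starting from the step-$r$ extensor $a_1 \vee \cdots \vee a_r$ and meeting successively with the $r$ covectors $B_{1'}, \ldots, B_{r'}$, each of step $n-1$, the step drops by one at every stage (from $k$ to $k-1$, since $k + (n-1) - n = k-1$), so after all $r$ meets only a step-$0$ element survives. I would then prove that this scalar equals the same determinant by induction on $r$, the base case $r = 1$ being exactly $a_1 \wedge B_{1'} = [a_1\, B_{1'}]$.

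For the inductive step I would expand the first meet by means of the coproduct-slice formula with $n - b = 1$:
$$
\left(a_1 \vee \cdots \vee a_r\right) \wedge B_{1'} = \sum_{i=1}^r (-1)^{i-1} [a_i\, B_{1'}] \; a_1 \vee \cdots \vee \widehat{a_i} \vee \cdots \vee a_r,
$$
where $(-1)^{i-1}$ is the sign carried by the $(1, r-1)$-slice of the coproduct of $a_1 \vee \cdots \vee a_r$. Since the meet is $\mathbb{K}$-linear in its first argument, I would then meet each summand with $B_{2'} \wedge \cdots \wedge B_{r'}$ and apply the inductive hypothesis to the $(r-1)$-fold meet of $a_1 \vee \cdots \vee \widehat{a_i} \vee \cdots \vee a_r$ with the remaining covectors; this yields precisely the minor of $\left([a_j\, B_{i'}]\right)$ obtained by deleting row $i$ and column $1$. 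The resulting sum is exactly the Laplace (cofactor) expansion of $\det\left([a_j\, B_{i'}]\right)$ along its first column, which closes the induction.

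The only delicate point is the sign bookkeeping, and it is benign: the plain meet formula splits off a single vector with exactly the coproduct sign $(-1)^{i-1}$, which coincides with the cofactor sign $(-1)^{i+1}$ of the determinant, so no genuine obstruction arises. I note that the whole argument is nothing but a Laplace expansion, in accordance with the remark that the alternative laws of this subsection are Laplace expansions of the permanental Capelli term $\mathrm{per}\left(\diamond_{j'_s i_t}\right)$ in $(\ddag)$; alternatively, the identity can be read off directly from that permanental identity once the join $a_1 \vee \cdots \vee a_r$ and the successive meets are encoded through the simple geometric products $\diamond_{ji}$ and their divided powers, as developed in Section 7.
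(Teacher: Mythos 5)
Your proof is correct, and it takes a genuinely different route from the paper's. You stay entirely inside the CG-algebra: you identify both sides with the bracket determinant $\det\bigl([a_j\,B_{i'}]\bigr)$, the right-hand side because each factor $a_{\sigma(i)}\wedge B_{i'}$ is the scalar $[a_{\sigma(i)}\,B_{i'}]$, and the left-hand side by induction on $r$, the $(1,r-1)$ coproduct slice of the meet with $B_{1'}$ producing exactly the cofactor expansion of that determinant along its first column (and your sign check $(-1)^{i-1}=(-1)^{i+1}$ is right). The paper instead works in the tensor power $\Lambda(V)^{\otimes(2r+1)}$ with folds indexed $0,1,\ldots,r,1',\ldots,r'$: it specializes the permanental Capelli operator identity $(\ddag)$, evaluates both sides of that operator identity on the tensor $1\otimes a_1\otimes\cdots\otimes a_r\otimes B_{1'}\otimes\cdots\otimes B_{r'}$ --- the join and iterated meets arise from the left-hand side, the permanent $\sum_{\sigma}\diamond_{r'\sigma_r}\cdots\diamond_{1'\sigma_1}$ from the right-hand side, and the Capelli queues annihilate this tensor --- and then clears a common sign $(-1)^{{r\choose 2}(n+1)}$. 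Your argument is more elementary and self-contained (it needs only the coproduct definition of the meet and the Laplace expansion of a determinant), avoids the operator calculus altogether, and yields the explicit scalar value $\det\bigl([a_j\,B_{i'}]\bigr)$ of both sides as a by-product. What the paper's method buys is uniformity and scalability: the same Capelli template, under a different Laplace expansion of $\mathrm{per}\left(\diamond_{j'_s i_t}\right)$, also yields the much deeper alternative law of Proposition 6, where an induction like yours would be considerably harder to organize; it moreover exhibits the identity as the image of an identity in the enveloping algebra $\mathbf{U}(\mathbf{gl}(m,\mathbb{Z}))$, which is the structural theme of the paper. Your closing remark correctly anticipates exactly this connection.
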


\begin{proof}
By specializing the identity $(\ddag)$
on the tensor power of exterior algebra $\Lambda(V)$ with
folds indexed $0, 1, \ldots, r,$ $ 1', \ldots, r',$
we have the operator identity
$$
\diamond_{r'0} \cdots \diamond_{1'0}
\diamond_{0r} \cdots \diamond_{01}
=
\sum_{\sigma \in S_r} \diamond_{r'\sigma_r} \cdots  \diamond_{1'\sigma_1}
+
\sum_{s=1}^r  \diamond_{\ast \ast} \cdots \diamond_{s'0}.
$$
By evaluating the expression on the left hand side
on the tensor
$
1 \otimes a_1 \otimes \cdots \otimes a_r \otimes B_{1'} \otimes \cdots \otimes B_{r'},
$
we get
\begin{align*}
\phantom{=} &
\diamond_{r'0} \cdots \diamond_{1'0}
\diamond_{0r} \cdots \diamond_{01}
\left(
1 \otimes a_1 \otimes \cdots \otimes a_r
\otimes B_{1'} \otimes \cdots \otimes B_{r'}
\right)
\\
=&
\diamond_{r'0} \cdots \diamond_{1'0}
\left(
(a_1 \vee \cdots \vee a_r)
\otimes 1 \otimes \cdots \otimes 1 \otimes B_{1'} \otimes \cdots \otimes B_{r'}
\right)
\\
=&
(-1)^{{r \choose 2}(n+1)}
\left((a_1 \vee \cdots \vee a_r) \wedge B_{1'} \wedge
\cdots \wedge B_{r'} \right) \otimes 1 \otimes \cdots \otimes
1 \otimes E \otimes \cdots \otimes E.
\end{align*}
By evaluating the expression on the right hand side
on the tensor
$
1 \otimes a_1 \otimes \cdots \otimes a_r \otimes B_{1'} \otimes \cdots \otimes B_{r'},
$
since the Capelli queues vanish on this tensor,
we get
\begin{align*}
\phantom{=} &
\left[
\sum_{\sigma \in S_r}
\diamond_{r'\sigma_r}\cdots \diamond_{1'\sigma_1}
\right]
\left(
1 \otimes a_1 \otimes \cdots \otimes a_r
\otimes B_{1'} \otimes \cdots \otimes B_{r'}
\right)
\\
=&
\sum_{\sigma \in S_r}
(-1)^{{r \choose 2}(1+n) + |\sigma|}
\left(a_{\sigma_1} \wedge B_{1'}\right) \cdots
\left(a_{\sigma_r} \wedge B_{r'}\right) \otimes 1 \otimes \cdots
\otimes 1 \otimes E \otimes \cdots \otimes E.
\end{align*}
The statement follows by equating the last steps of the two evaluations,
and clearing up the signs.
\end{proof}

One of the deepest alternative laws is the last one
in (\cite{BBR}, Corollary 7.11); it can be viewed as playing a role in the CG-algebra
similar to the role of the distributive law for union and intersection of sets in Boolean algebra.

\begin{proposition}
In the Cayley-Grassmann algebra of a Peano space $(V, [\ ]),$
let $C_1, \ldots, C_r$ be extensors of steps $n-q_1 > 0, \ldots, n-q_r > 0,$ $q_i >0.$ Set $p = q_1 + \cdots + q_r.$
Let $A$ and $B$ be extensors, $step(A) = s,$ $step(B) = k,$ $s + k = p.$
Then
$$
(A \vee B) \wedge (C_1 \wedge \cdots \wedge C_r)
=
A \wedge
\sum_{(i_1, \ldots, i_r)} \varepsilon_{i_1, \ldots, i_r}
\sum_{(B)}
(B_{(1)} \vee C_1) \wedge \cdots \wedge (B_{(r)} \vee C_r),
$$
where: the first sum is taken over all the $r-$tuples $(i_1,
\ldots, i_r) \vdash s,$ the second sum is taken over all the
$r-$slices of $B$ of type $(q_1-i_1, \ldots, q_r-i_r) \vdash p-s =
k,$ and
$$
\varepsilon_{i_1, \ldots, i_r}
=
sg \left( \sum_{r\geq h > k \geq
1} i_h(q_k-i_k) \right).
$$
\end{proposition}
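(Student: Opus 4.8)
The plan is to reduce to the case in which $A$ and $B$ are decomposable and then to specialize the permanental Capelli identity $(\ddag)$ on a tensor power $\Lambda(V)^{\otimes m}$, exactly in the spirit of the proof of the preceding Proposition but with an enlarged set of folds. Since the asserted identity is multilinear in the vector factors, I would write $A = a_1 \vee \cdots \vee a_s$ and $B = b_1 \vee \cdots \vee b_k$ and work with $m = 1 + s + k + r$ folds: one auxiliary fold $0$, initially empty; source folds carrying the vectors $a_1, \ldots, a_s$ and $b_1, \ldots, b_k$; and $r$ sink folds carrying $C_1, \ldots, C_r$. Every subsequent step then consists in evaluating the two members of $(\ddag)$ on the decomposable tensor that places each vector and each $C_j$ in its designated fold, the $b$-folds and $a$-folds being kept distinct so that the expansion can later tell the origin of each routed factor apart.

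First I would read off the virtual (left-hand) form. The source moves feed the vectors of $A$ and $B$ into fold $0$, building $A \vee B$ there, while the sink moves are taken to be the divided powers $\diamond_{j'0}^{(q_j)}$. Since $C_j$ has step $n - q_j$, a $q_j$-slice wedged into $C_j$ completes to a top form, so $\diamond_{j'0}^{(q_j)}$ performs precisely the meet of that slice with $C_j$, just as the identity $\diamond_{21}^{(n-b)}(A \otimes B) = (A \dot{\wedge} B) \otimes E$ of Subsection 4.1 prescribes. Summing over the coproduct slices of $A \vee B$ reconstitutes the iterated meet, so the virtual form evaluates to $(A \vee B) \wedge (C_1 \wedge \cdots \wedge C_r)$ up to an explicit sign and the expected factors of $E$ in the sink folds. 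As in the previous proof, the Capelli queues $\diamond_{\ast\ast}\cdots\diamond_{j'_s 0}$ vanish on this tensor, since each carries a sink move acting first on the still-empty fold $0$.

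Next I would expand the resulting permanental term by a generalized (block) Laplace expansion, grouping the sink slots by the $C_j$ they feed into $r$ blocks of sizes $q_1, \ldots, q_r$ and splitting the sources into the $a$-vectors and the $b$-vectors. For a fixed distribution in which sink $j$ absorbs $i_j$ of the $a$'s and $q_j - i_j$ of the $b$'s, the $b$-vectors routed to $j$ assemble, as one ranges over all such routings, into a coproduct slice $B_{(j)}$ of step $q_j - i_j$ joined to $C_j$ as $B_{(j)} \vee C_j$, while the $a$-vectors routed to the various sinks, reassembled through the coproduct of $A$, are exactly what the outer meet $A \wedge \bigl(\,\cdots\,\bigr)$ redistributes. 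Summing over the block sizes produces the outer sum over $(i_1, \ldots, i_r) \vdash s$, and summing over the routings of the $b$'s produces the inner sum over the $r$-slices of $B$ of type $(q_1 - i_1, \ldots, q_r - i_r)$, which is the right-hand side.

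The main obstacle is the sign. The routed $a$- and $b$-contributions reach the sinks interleaved, and bringing each block into the normal form $B_{(j)} \vee C_j$ with the $A$-part relegated to the final meet requires commuting, for every pair $h > k$, the $i_h$ factors destined for sink $h$ past the $q_k - i_k$ factors already joined to $C_k$; this is the transposition count $\sum_{r \ge h > k \ge 1} i_h(q_k - i_k)$ and yields $\varepsilon_{i_1, \ldots, i_r} = \mathrm{sg}\bigl(\sum_{h>k} i_h(q_k - i_k)\bigr)$. The remaining, most delicate task is to verify that the global sign and the $E$-factors coming from the virtual-form evaluation match across the two members of $(\ddag)$; once these are reconciled, equating the two evaluations and clearing the common factors delivers the stated identity.
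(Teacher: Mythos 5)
Your proposal is correct in outline and runs on the same engine as the paper's proof --- the ``normalized'' permanental Capelli identity $(\ddag)$, evaluation of the virtual form to produce $(A\vee B)\wedge(C_1\wedge\cdots\wedge C_r)$, vanishing of the Capelli queues because fold $0$ starts empty, and a block Laplace expansion indexed by $(i_1,\ldots,i_r)\vdash s$ --- but it implements that engine with a genuinely different decomposition. The paper works in $\Lambda(V)^{\otimes(r+3)}$, keeping $A$ and $B$ whole in folds $1$ and $2$ and taking as source moves the divided powers $\diamond_{01}^{(s)}$ and $\diamond_{02}^{(k)}$; you work in $\Lambda(V)^{\otimes(1+s+k+r)}$ with every vector $a_i$, $b_j$ in its own fold, transplanting the scheme of the paper's proof of the preceding Proposition (vectors against covectors). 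The trade-off is real: in the paper's version the divided powers $\diamond_{2+j,2}^{(q_j-i_j)}$ and $\diamond_{2+j,1}^{(i_j)}$, acting on the \emph{whole} extensors $B$ and $A$, produce the coproduct slices $B_{(j)}$ and the iterated meet $A\,\dot{\wedge}\,B_{(1)}C_1\,\dot{\wedge}\cdots\dot{\wedge}\,B_{(r)}C_r$ automatically, so the structured right-hand side appears at once and the sign bookkeeping is confined to the five explicit factors $\alpha,\beta_1,\beta_2,\gamma_1,\gamma_2$; in your version the expansion lands on an unstructured signed sum of products of brackets, and the two ``reassembly'' claims you invoke --- that summing over routings of the $b$'s with block sizes $q_j-i_j$ reconstitutes $\sum_{(B)}(B_{(1)}\vee C_1)\wedge\cdots\wedge(B_{(r)}\vee C_r)$, and that summing over routings of the $a$'s reconstitutes the outer meet with $A$ --- are precisely the facts that the divided-power formalism packages for free, so in your route they must be proved by hand, signs included. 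That is extra work rather than an error: your identification of the transposition count $\sum_{r\geq h>k\geq 1} i_h(q_k-i_k)$ as the source of $\varepsilon_{i_1,\ldots,i_r}$ is consistent with the paper's (equally unverified) final assertion $\beta_1\gamma_1\alpha\beta_2\gamma_2=\varepsilon$, and both arguments leave that last sign reconciliation to the reader.
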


\begin{proof}
Let us consider the tensor $ 1 \otimes A \otimes B
\otimes C_1 \otimes \cdots \otimes C_r $ in the tensor space of
$r+3$ copies of the exterior algebra $\Lambda(V),$ where the copies
are indexed $0, 1, 2, 3, \ldots, 2+r.$

\noindent
We have
\begin{align*}
& \phantom{=} \diamond_{2+r,0}^{(q_r)}  \cdots  \diamond_{30}^{(q_1)}  \diamond_{02}^{(k)}  \diamond_{01}^{(s)}
 \left( 1 \otimes A \otimes B \otimes C_1
\otimes \cdots \otimes C_r \right)
\\
=& \diamond_{2+r,0}^{(q_r)}  \cdots \diamond_{30}^{(q_1)}  \left( AB \otimes 1 \otimes 1 \otimes C_1 \otimes
\cdots \otimes C_r \right)
\\
=& \beta_1 \times \left( AB \dot{\wedge} C_1  \cdots
\dot{\wedge} \cdots \dot{\wedge} C_r \right) \times 1 \otimes 1
\otimes 1 \otimes E \otimes \cdots \otimes E
\\
=& \gamma_1 \beta_1 \times \left( AB \wedge C_1 \wedge C_2 \wedge
\cdots \wedge C_r \right) \times 1 \otimes 1 \otimes 1 \otimes E
\otimes E \otimes \cdots \otimes E,
\end{align*}
where
$$
\beta_1 = sg \left( n \sum_{r\geq h \geq 1} q_h (h-1) \right),
\qquad \gamma_1 = sg \left( \sum_{r\geq h > k \geq 1} q_h q_k
\right).
$$
Notice that the operator $\diamond_{2+r,0}^{(q_r)}  \cdots  \diamond_{30}^{(q_1)}  \diamond_{02}^{(k)}
\diamond_{01}^{(s)}$ is a  ``normalized" permanental Capelli operator in the sense of formula $(\ddag).$
Thus, by applying a Laplace expansion (of steps $(s, k)$), we get:
\begin{multline*}
\diamond_{2+r,0}^{(q_r)}  \cdots  \diamond_{30}^{(q_1)}  \diamond_{02}^{(k)}  \diamond_{01}^{(s)}
=
\diamond_{2+r,0}^{(q_r)}  \cdots  \diamond_{30}^{(q_1)}  \diamond_{01}^{(s)} \diamond_{02}^{(k)}
\\
= \sum_{(i_1, \ldots, i_r)} \left(\diamond_{2+r, 1}^{(i_r)} \cdots \diamond_{31}^{(i_1)} \right)
\times
\left(\diamond_{2+r, 2}^{(q_r-i_r)} \cdots \diamond_{32}^{(q_1-i_1)}\right) +
\sum  \diamond_{\ast, \ast} \cdots \diamond_{\ast 0},
\end{multline*}
where the first sum is taken over all the $r-$tuples $(i_1, \ldots, i_r) \vdash s,$
and the Capelli queues $ \diamond_{\ast, \ast} \cdots \diamond_{\ast 0}$
annihilate the tensor $1 \otimes A \otimes B \otimes C_1 \otimes C_2
\otimes \cdots \otimes C_r.$
Thus, we have
\begin{align*}
& \phantom{=}
\diamond_{2+r,0}^{(q_r)} \cdots \diamond_{30}^{(q_1)}
\diamond_{02}^{(k)} \diamond_{01}^{(s)}
\left(
1 \otimes A \otimes B \otimes C_1 \otimes \cdots \otimes C_r
\right)
\\
&=
\sum_{(i_1, \ldots, i_r)}
\diamond_{2+r, 1}^{(i_r)} \cdots \diamond_{31}^{(i_1)}
\times
\diamond_{2+r, 2}^{(q_r-i_r)} \cdots \diamond_{32}^{(q_1-i_1)}
\left(
1 \otimes A \otimes B \otimes C_1 \otimes \cdots \otimes C_r
\right)
\\
&=
\sum_{(i_1, \ldots, i_r)}
\alpha
\sum_{(B)}
\diamond_{2+r, 1}^{(i_r)} \cdots \diamond_{31}^{(i_1)}
\left(
1 \otimes A \otimes 1 \otimes B_{(1)}C_1 \otimes \cdots \otimes B_{(r)}C_r
\right)
\\
&=
\sum_{(i_1, \ldots, i_r)}
\alpha \beta_2
\sum_{(B)}
\left(
A \dot{\wedge} B_{(1)}C_1 \dot{\wedge} \cdots \dot{\wedge} B_{(r)}C_r
\right)
\times
1 \otimes 1 \otimes 1 \otimes E \otimes \cdots \otimes E
\\
&=
\sum_{(i_1, \ldots, i_r)}
\alpha \beta_2 \gamma_2
\sum_{(B)}
\left(
A \wedge B_{(1)}C_1 \wedge \cdots \wedge B_{(r)}C_r
\right)
\times
1 \otimes 1 \otimes 1 \otimes E \otimes \cdots \otimes E
\end{align*}
where the first sum is taken over al the $r-$tuples $(i_1, \ldots,
i_r) \vdash s,$ the second sum is taken over all the $r-$slices of
$B$ of type $(q_1-i_1, \ldots, q_r-i_r) \vdash p-s = k,$ and
$$
\alpha = sg \left( \sum_{r \geq h > k \geq 1} (q_h - i_h) (n -
q_k) \right), \quad \beta_2 = sg \left( n\sum_{r \geq h \geq 1}
i_h (h-1) \right),
$$
$$
\gamma_2 = sg \left( \sum_{r \geq h > k
\geq 1} i_h i_k \right).
$$
Thus we have
$$
\gamma_1 \beta_1 \times \left( AB \wedge C_1 \wedge C_2 \wedge
\cdots \wedge C_r \right) = \sum \alpha \beta_2 \gamma_2 \sum (
A \wedge B_{(1)} C_1 \wedge \cdots \wedge B_{(r)} C_r ),
$$
and it turns out that
$$
\beta_1 \gamma_1 \alpha \beta_2 \gamma_2 = \varepsilon.
$$
\end{proof}

\section{The modular law}

In the exterior algebra $\Lambda(V)$ of a vector space $V,$ let
$A, B, C$ be extensors, where $A$ divides $C.$ Thus $\overline{A},
\overline{B}, \overline{C}$ are subspaces of $V,$ with
$\overline{A} \subseteq \overline{C},$ and in the lattice of subspaces
of $V$ we have the modular law
$$
(\overline{A} \smile \overline{B}) \frown \overline{C} =
\overline{A} \smile (\overline{B} \frown \overline{C}).
$$
It is natural to expect that this law has a counterpart in terms
of geometric products on the tensor power $\Lambda(V)^{\otimes
3}.$

Consider the pair of integers $p, q$ with
the following geometric meanings:
\begin{align*}
p &=& \rho \left(\overline{A}/(\overline{A} \frown \overline{B})\right)
&=& \rho \left((\overline{A} \smile \overline{B} \frown
\overline{C})/(\overline{B} \frown \overline{C})\right) &=& \rho
\left((\overline{A} \smile \overline{B})/\overline{B}\right)
\\
q &=& \rho \left((\overline{B} \smile \overline{C})/\overline{C}\right)
&=& \rho \left((\overline{A} \smile \overline{B})/(\overline{A}
\smile \overline{B} \frown \overline{C})\right) &=& \rho
\left(\overline{B}/(\overline{B} \frown \overline{C})\right).
\end{align*}

On the one hand, we have
$$
\diamond_{32}^{(q)} \diamond_{21}^{(p)} \left( A \otimes B \otimes
C \right) = A_1 \otimes B_1 \otimes C_1,
$$
where
\begin{align*}
\overline{A_1} &= \overline{A} \frown \overline{B}
\\
\overline{B_1} &= \left(\overline{A} \smile \overline{B}\right)
\frown \overline{C}
\\
\overline{C_1} &= \overline{A} \smile \overline{B} \smile
\overline{C}.
\end{align*}

On the other hand, we have
$$
\diamond_{21}^{(p)} \diamond_{32}^{(q)} \left( A \otimes B \otimes
C \right) = A_2 \otimes B_2 \otimes C_2,
$$
where
\begin{align*}
\overline{A_2} &= \overline{A} \frown \overline{B} \frown
\overline{C}
\\
\overline{B_2} &= \overline{A} \smile \left(\overline{B} \frown
\overline{C}\right)
\\
\overline{C_2} &= \overline{B} \smile \overline{C}.
\end{align*}

Thus, $\overline{A_1} = \overline{A_2},$ $\overline{B_1} =
\overline{B_2},$ $\overline{C_1} = \overline{C_2},$ and it is
natural to expect that $ A_1 \otimes B_1 \otimes C_1 = A_2 \otimes
B_2 \otimes C_2, $ i.e., that
$$
\diamond_{32}^{(q)} \diamond_{21}^{(p)} \left( A \otimes B \otimes
C \right) = \diamond_{21}^{(p)} \diamond_{32}^{(q)} \left( A
\otimes B \otimes C \right)
$$
This identity is true, and turns out to be a special case of a more general one.
Given any pair of non-negative integers $s, t$, if  $A$ divides $C,$ we have
\begin{multline*}
\frac {\diamond_{32}^t} {t!}   \frac {\diamond_{21}^s} {s!} (A \otimes B \otimes C) =
\left(\frac {\diamond_{32}^{t-1}} {t!}  \diamond_{21}  \diamond_{32}
\frac {\diamond_{21}^{s-1}} {s!} +
\frac {\diamond_{32}^{t-1}} {t!}  \frac {\diamond_{21}^{s-1}} {s!}  \diamond_{31}\right)
(A \otimes B \otimes C) =
\\
= \frac {\diamond_{32}^{t-1}} {t!}  \diamond_{21}  \diamond_{32}
\frac {\diamond_{21}^{s-1}} {s!}(A \otimes B \otimes C) = \ldots =
\frac {\diamond_{21}^s} {s!} \frac {\diamond_{32}^t} {t!}(A \otimes B \otimes C),
\end{multline*}
since, under the assumption that $A$ divides $C,$ we have
$$
\diamond_{31} \left( A \otimes B \otimes C \right) = 0.
$$

\section{Appendix: Left span, right span, and other invariants of a tensor}

In this section, we recall some facts about
basic invariants of a tensor
in the tensor product of two spaces.

\begin{enumerate}

\item
Let $V$ and $W$ be two vector spaces, and
let $V \otimes W$ their tensor product.
Any tensor $t \in V \otimes W$ has various
representations. The left span of a representation
$$
t = \sum_{i=1}^p v_i \otimes w_i
$$
of $t$ is the space $\langle v_1, \ldots, v_p \rangle$ generated
by the vectors of $V$ occurring in the representation.
The {\it left span} of $t$ is the space $L_t$
intersection of the left spans of all the representations of $t.$

\noindent
If the vectors $w_1, \ldots, w_p$ of $W$
occurring in the representation are independent,
the representation is called right-independent.
Any tensor admits a right-independent representation.

\begin{proposition}
The left span of a right-independent representation of a tensor
$t$ is contained in the left span of any representation of $t.$ As
a consequence, the left span $L_t$ of $t$ is the left span of any
right-independent representations of $t.$
\end{proposition}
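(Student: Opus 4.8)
The plan is to exploit the linear independence of the right-hand vectors in a right-independent representation by introducing dual functionals, thereby turning the abstract statement about spans into an explicit recovery of each left-hand vector. First I would fix a right-independent representation $t = \sum_{i=1}^p v_i \otimes w_i$, so that $w_1, \ldots, w_p$ are linearly independent in $W$. Since any finite linearly independent family extends to a basis, for each index $i$ there is a linear functional $\phi_i \in W^{\ast}$ with $\phi_i(w_k) = \delta_{ik}$ for $k = 1, \ldots, p$. I would then form the contraction map $\mathrm{id}_V \otimes \phi_i : V \otimes W \to V$, $v \otimes w \mapsto \phi_i(w)\, v$, which is well-defined and linear by the universal property of the tensor product.

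The key step is to evaluate this contraction on $t$ in two different ways. Applied to the chosen right-independent representation it gives $(\mathrm{id}_V \otimes \phi_i)(t) = \sum_{k=1}^p \phi_i(w_k)\, v_k = v_i$. Applied to an arbitrary representation $t = \sum_{j=1}^q v'_j \otimes w'_j$ it gives $(\mathrm{id}_V \otimes \phi_i)(t) = \sum_{j=1}^q \phi_i(w'_j)\, v'_j$, which plainly lies in $\langle v'_1, \ldots, v'_q \rangle$. Equating the two outcomes yields $v_i \in \langle v'_1, \ldots, v'_q \rangle$ for every $i$, whence $\langle v_1, \ldots, v_p \rangle \subseteq \langle v'_1, \ldots, v'_q \rangle$, which is precisely the first assertion.

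For the consequence, I would recall that $L_t$ is by definition the intersection of the left spans over all representations of $t$. The inclusion just established shows that the left span $L_0$ of the fixed right-independent representation sits inside every term of this intersection, so $L_0 \subseteq L_t$; conversely $L_t \subseteq L_0$ is automatic because $L_0$ is itself one of the intersected terms, and therefore $L_t = L_0$. The only delicate point is the existence of the functionals $\phi_i$, which is exactly where the right-independence hypothesis enters; this is harmless here, since $W$ is finite-dimensional and, more generally, a finite independent family always admits such a biorthogonal dual system.
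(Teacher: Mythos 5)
Your proof is correct. Note that the paper itself states this proposition without proof (it appears in the Appendix, whose facts are explicitly ``recalled'' as standard background), so there is no argument of the authors to compare against; your contraction argument via dual functionals $\phi_i \in W^{\ast}$ with $\phi_i(w_k) = \delta_{ik}$ is the standard one, and it correctly handles both assertions, including the observation that right-independence is needed only to produce the biorthogonal system, which exists for any finite independent family regardless of whether $W$ is finite-dimensional.
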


\item
Analogous concepts and results are obtained by exchanging left and right.

\noindent
We use the term ``independent representation" instead of
``(left and right)-independent representation".
Any tensor admits an independent representation.

\begin{proposition}
The number of summands of an independent representation of a tensor $t$ is
less than or equal to the number of summands in any other representation of $t;$
when equality holds, the latter representation is independent.
\end{proposition}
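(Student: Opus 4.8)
The plan is to reduce the entire statement to a single dimension count for the left span $L_t$ (and, symmetrically, the right span $R_t$), using Proposition 9 and its right-handed analogue. First I would record that an independent representation $t = \sum_{i=1}^{p} v_i \otimes w_i$ is in particular right-independent, so by Proposition 9 its left span $\langle v_1, \ldots, v_p \rangle$ coincides with the intrinsic left span $L_t$; since the representation is also left-independent, the $v_i$ are linearly independent, and therefore $\dim L_t = p$. This identifies the number of summands of \emph{any} independent representation with the basis-free invariant $\dim L_t$, and by the symmetric argument also with $\dim R_t$.

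For the inequality, let $t = \sum_{j=1}^{q} v'_j \otimes w'_j$ be an arbitrary representation. By the very definition of $L_t$ as the intersection of the left spans of all representations of $t$, we have $L_t \subseteq \langle v'_1, \ldots, v'_q \rangle$, and the latter space has dimension at most $q$. Chaining these gives $p = \dim L_t \leq \dim \langle v'_1, \ldots, v'_q \rangle \leq q$, which is exactly the asserted bound.

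To settle the equality case, I would suppose $q = p$ and observe that the chain $p = \dim L_t \leq \dim \langle v'_1, \ldots, v'_q \rangle \leq q = p$ must then collapse to equalities throughout. In particular $\dim \langle v'_1, \ldots, v'_q \rangle = q$, so the $q$ vectors $v'_j$ are linearly independent and the representation is left-independent. Running the same argument with $R_t$ in place of $L_t$ shows that the $w'_j$ are independent as well, so the representation is right-independent; hence it is independent.

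The argument is essentially routine once Proposition 9 is in hand; the only point genuinely requiring care is the clean identification of the number of summands of an independent representation with the intrinsic invariant $\dim L_t = \dim R_t$, which is precisely what Proposition 9 and its right-hand counterpart supply. Beyond that, the sole bookkeeping to watch is the symmetric treatment of the left and right sides in the equality case, so that independence on both factors is recovered simultaneously.
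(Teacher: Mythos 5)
Your proof is correct. Note that the paper itself offers no proof of this proposition: it appears in the appendix, which merely \emph{recalls} standard facts about tensors in $V \otimes W$, and only the last proposition of that appendix (on the canonical pairing $\beta$) is proved there. So there is no argument of the paper to compare against; judged on its own, your reduction is exactly the natural one. You correctly use the consequence clause of Proposition 9 (an independent representation is in particular right-independent, hence its left span \emph{is} $L_t$) together with left-independence to get $\dim L_t = p$, then the definitional inclusion $L_t \subseteq \langle v'_1, \ldots, v'_q \rangle$ gives $p \leq q$, and the collapse of the dimension chain when $q = p$ forces the $v'_j$ to be linearly independent; the symmetric argument with $R_t$ (licensed by the paper's remark that all concepts and results transfer under exchange of left and right) handles the $w'_j$. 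The one point worth making explicit, which you do, is that both halves of the equality case must be run separately — left-independence alone would not suffice to conclude the representation is independent.
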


\noindent
Due to the preceding Proposition, an independent representation
is also said to be a {\it minimal representation}.

\item
In any left-independent representation of a tensor $t,$
$$
t = \sum_{i=1}^p v_i \otimes w_i
$$
the list $(w_1, \ldots, w_p)$ of the vectors occurring on the right
is uniquely determined by
the list $(v_1, \ldots, v_p)$ of the vectors occurring on the left.
The linear mapping
$$
L_t \rightarrow R_t,
\qquad
v_i \mapsto w_i,
\quad
i = 1,\ldots, p
$$
is an isomorphism.

\begin{proposition}
Let $\phi: L_t \rightarrow R_t$ and $\phi': L_t \rightarrow R_t$ be
the linear isomorphism associated to two independent representations
$$
t = \sum_{i=1}^p v_i \otimes w_i = \sum_{i=1}^p v'_i \otimes w'_i
$$
of a tensor $t.$ Then $\phi = \phi'$ if and only if
the transition matrix for the basis
$(v_1, \ldots, v_p)$ to the basis $(v'_1, \ldots, v'_p)$ of $L_t$
is an orthogonal matrix.
\end{proposition}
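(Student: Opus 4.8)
The plan is to confine the dependence of the isomorphism $\phi$ on the chosen independent representation to a single intrinsic datum, and then to recognize orthogonality of the transition matrix as precisely the condition under which that datum is unchanged.

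First I would record that, since both representations are independent, the preceding Propositions guarantee that $(v_1,\ldots,v_p)$ and $(v'_1,\ldots,v'_p)$ are bases of $L_t$, while $(w_1,\ldots,w_p)$ and $(w'_1,\ldots,w'_p)$ are bases of $R_t$. Viewing $t$ as an element of $L_t \otimes R_t$, I would introduce the \emph{representation-independent} contraction map $\check t : L_t^{*} \rightarrow R_t$, $\xi \mapsto (\xi \otimes \mathrm{id})(t) = \sum_i \xi(v_i)\, w_i$; it is surjective because $(w_i)$ spans $R_t$ and injective by equality of dimensions, hence an isomorphism that does not refer to any particular representation. The key observation is that, writing $(v_i^{*})$ for the dual basis of $L_t^{*}$ attached to $(v_i)$, one has $\check t(v_j^{*}) = w_j$, so that $\phi = \check t \circ \iota_v$, where $\iota_v : L_t \rightarrow L_t^{*}$ is the isomorphism $v_i \mapsto v_i^{*}$; likewise $\phi' = \check t \circ \iota_{v'}$. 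Since $\check t$ is a fixed isomorphism, this yields at once that $\phi = \phi'$ if and only if $\iota_v = \iota_{v'}$.

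Next I would read $\iota_v$ as a musical isomorphism. It is the correlation of the nondegenerate symmetric form $g_v$ on $L_t$ for which $(v_1,\ldots,v_p)$ is orthonormal, namely $g_v(x,y) = \sum_i v_i^{*}(x)\, v_i^{*}(y)$, since $\iota_v(x) = g_v(x,\cdot)$; similarly $\iota_{v'}$ corresponds to the form $g_{v'}$ making $(v'_i)$ orthonormal. Hence $\iota_v = \iota_{v'}$ if and only if $g_v = g_{v'}$. It then remains to translate $g_v = g_{v'}$ into orthogonality of the transition matrix $P = (P_{ij})$ defined by $v'_j = \sum_i P_{ij} v_i$: because $g_{v'}$ is characterized by $(v'_i)$ being orthonormal, the equality $g_v = g_{v'}$ holds exactly when $(v'_i)$ is orthonormal for $g_v$ as well, and a one-line computation gives $g_v(v'_j, v'_k) = \sum_i P_{ij} P_{ik} = (P^{T}P)_{jk}$, so the condition is precisely $P^{T}P = I$, i.e. $P$ orthogonal.

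I expect the only genuinely delicate point to be the first step, namely establishing that $\check t$ is representation-independent and invertible and that the representation-dependence of $\phi$ is captured entirely by $\iota_v$ through the factorization $\phi = \check t \circ \iota_v$; once this is in hand the remaining equivalences are routine linear algebra. As a cross-check I would keep in reserve the purely matricial route: writing $t = \sum_{i,j} T_{ij}\, e_i \otimes f_j$ in fixed bases of $L_t$ and $R_t$, factoring $T = A B^{T}$ for each independent representation (with $A,B$ invertible), and computing that the matrix of $\phi$ equals $B A^{-1} = T^{T}(A A^{T})^{-1}$; since $A' = AP$, the condition $A A^{T} = A' (A')^{T}$ reduces to $P P^{T} = I$, recovering the same conclusion while making the intrinsic argument's role for orthogonality transparent.
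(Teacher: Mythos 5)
Your proof is correct, but it takes a genuinely different route from the one the paper relies on. The paper states this Proposition without an explicit proof; the intended argument is the matricial one recorded as the ``preliminary remark'' in the proof of the Proposition that follows it (the one on the canonical pairing $\beta$): any two minimal representations of $t$ are related by a unique nonsingular matrix $T$ via $v'_i = v_i \cdot T$, $w'_i = T^{-1}\cdot w_i$, so that $\phi'(v'_i)=w'_i=\sum_k (T^{-1})_{ik}w_k$ while $\phi(v'_i)=\sum_h T_{hi}w_h$, and comparing coefficients gives $\phi=\phi'$ if and only if $T^{T}=T^{-1}$, i.e.\ $T$ orthogonal. This is precisely the ``purely matricial route'' you keep in reserve as a cross-check (your $P$, resp.\ $A'=AP$, plays the role of the paper's $T$). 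Your main argument is different and more structural: factoring $\phi=\check t\circ\iota_v$ through the representation-independent contraction $\check t\colon L_t^{*}\to R_t$ confines the whole dependence on the representation to $\iota_v$, which you correctly identify as the musical isomorphism of the form $g_v$ making $(v_1,\ldots,v_p)$ orthonormal; the chain $\phi=\phi'$ iff $\iota_v=\iota_{v'}$ iff $g_v=g_{v'}$ iff $P^{T}P=I$ is sound, and each link is verified by a routine computation. What your approach buys is conceptual content: it exhibits the representation-dependence of $\phi$ as exactly the choice of an inner product on $L_t$, which explains \emph{why} the orthogonal group appears here, in the same way it appears for Hodge $\ast$-operators in Remark 2 (two ordered bases yield the same $\ast$ iff their transition matrix is orthogonal). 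What the paper's route buys is economy: the relation $v'_i=v_i\cdot T$, $w'_i=T^{-1}\cdot w_i$ has to be established anyway for the invariance of the pairing $\beta$, and it settles the present Proposition in two lines.
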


\item
From the previous item,
there is no canonical linear mapping from the left span to the right span of a tensor.
There is, instead, a canonical bilinear mapping on
the product of the left span by the right span.

\begin{proposition}
Let $t$ be a tensor in $V\otimes W,$ and let
$
t = \sum_{i=1}^p v_i \otimes w_i
$
be an independent representation of $t;$ then, the bilinear mapping
$$
\beta : L_t \times R_t \rightarrow \mathbb{K},
\qquad
\beta(v_i, w_j) = \delta_{ij}
$$
depends only on the tensor $t.$
\end{proposition}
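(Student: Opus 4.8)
The plan is to prove that any two \emph{independent} (equivalently, minimal) representations of $t$ give rise to one and the same bilinear form, whence the asserted independence is immediate. So I would fix two independent representations, necessarily both of length $p = \dim L_t = \dim R_t$ by the minimality recalled in the second item above,
$$
t = \sum_{i=1}^p v_i \otimes w_i = \sum_{j=1}^p v'_j \otimes w'_j,
$$
and write $\beta$ and $\beta'$ for the bilinear forms on $L_t \times R_t$ prescribed by $\beta(v_i, w_j) = \delta_{ij}$ and $\beta'(v'_i, w'_j) = \delta_{ij}$. Since for an independent representation the left-hand list is a basis of $L_t$ and the right-hand list is a basis of $R_t$, both forms are well-defined, and it suffices to check $\beta(v'_a, w'_b) = \delta_{ab}$ for all $a, b$: then $\beta$ and $\beta'$ agree on every pair of vectors from a basis of $L_t$ and a basis of $R_t$, hence coincide as bilinear forms.

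First I would introduce the transition matrix between the two bases of $L_t$, writing $v'_a = \sum_i P_{ia} v_i$ with $P$ invertible. Substituting this into the second representation and collecting terms gives $t = \sum_i v_i \otimes \left(\sum_a P_{ia} w'_a\right)$. Comparing with $t = \sum_i v_i \otimes w_i$ and invoking the fact recalled above that in a left-independent representation the right-hand vectors are uniquely determined by the left-hand ones, I obtain $w_i = \sum_a P_{ia} w'_a$, that is $w'_b = \sum_k (P^{-1})_{bk} w_k$.

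The computation then closes by a single Kronecker contraction:
$$
\beta(v'_a, w'_b) = \sum_{i,k} P_{ia} (P^{-1})_{bk}\, \beta(v_i, w_k) = \sum_{i} P_{ia} (P^{-1})_{bi} = (P^{-1}P)_{ba} = \delta_{ab},
$$
so $\beta = \beta'$, and $\beta$ depends only on $t$.

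I expect the only real obstacle to be the bookkeeping: one must keep straight that the matrix relating the right-hand bases is forced to be the inverse of the one relating the left-hand bases, which is precisely what makes the two normalizations $\beta(v_i,w_j)=\delta_{ij}$ and $\beta'(v'_i,w'_j)=\delta_{ij}$ compatible. Conceptually, the cleanest way to see that no choice is involved is to identify $\beta$ with the intrinsic contraction isomorphism $\hat{t} : L_t^* \to R_t$, $\xi \mapsto (\xi \otimes \mathrm{id})(t)$, which is determined by $t$ with no reference to any representation; one checks at once that $\hat{t}(v_i^*) = w_i$, so that $\beta(x,y) = \langle \hat{t}^{-1}(y), x \rangle$ reproduces $\beta(v_i, w_j) = \delta_{ij}$, and since $\hat{t}$ is manifestly representation-free, so is $\beta$. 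I would present the matrix argument as the main proof and mention this contraction as the underlying reason.
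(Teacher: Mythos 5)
Your proof is correct and takes essentially the same route as the paper: both relate two minimal representations by a nonsingular transition matrix acting on the left-hand vectors with its inverse acting on the right-hand vectors, and then verify invariance of $\beta$ by the same Kronecker contraction, which the paper packages as the adjointness identity $\beta(v_i \cdot T,\, w_j) = \beta(v_i,\, T \cdot w_j)$. If anything, your explicit derivation of $w'_b = \sum_k (P^{-1})_{bk} w_k$ from the uniqueness of the right-hand vectors in a left-independent representation is more careful than the paper's ``in plain words'' assertion of the same fact, and your closing remark identifying $\beta$ with the representation-free contraction $\hat{t} \colon L_t^* \to R_t$ is a nice intrinsic complement that the paper does not include.
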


\begin{proof}
A preliminary remark. Notice that
\begin{multline*}
\sum_i (v_i \cdot T) \otimes w_i
:=
\sum_i \left(\sum_h v_h T_{hi}\right) \otimes w_i =
\\
=
\sum_i v_i \otimes \left(\sum_k T_{ik} w_k\right)
:=
\sum_i v_i \otimes (T \cdot w_i).
\end{multline*}
For every $T$ non singular, we have
$$
t = \sum_i (v_i \cdot T) \otimes (T^{-1} w_i) = \sum_i v_i \otimes w_i.
$$
In plain words, for any two minimal representations
$$
t = \sum_i v_i \otimes w_i = \sum_i v'_i \otimes w'_i,
$$
there is a unique non singular matrix $T$ such that
$
v'_i = v_i \cdot T,
$
$
w'_i = T^{-1} \cdot w_i.
$

\noindent
Let $\beta$ be the bilinear form associated to the representation
$t = \sum_i v_i \otimes w_i,$ that is, such that
$\beta(v_i, w_j) = \delta_{ij}.$

\noindent
Now, it is clear that
$$
\beta\left(v_i \cdot T, w_j \right) = \beta\left(v_i, T \cdot w_j
\right),
$$
for every $T.$ This immediately implies
$$
\beta\left(v_i \cdot T, T^{-1} \cdot w_j \right)
=
\beta\left(v_i, w_j \right),
$$
that is, the bilinear form $\beta$ is the same as the bilinear form
associated to the representation $t = \sum_i v'_i \otimes w'_i.$
\end{proof}
\end{enumerate}

\end{document}